\numberwithin{equation}{section}
\theoremstyle{definition}
\newtheorem*{rep@theorem}{\rep@title}
\newcommand{\newreptheorem}[2]{%
\newenvironment{rep#1}[1]{%
 \def\rep@title{#2 \ref{##1}}%
 \begin{rep@theorem}}%
 {\end{rep@theorem}}}
\newtheorem{theorem}{Theorem}[section]
\newtheorem{lemma}[theorem]{Lemma}
\newtheorem*{theorem*}{Theorem}
\newtheorem*{proposition*}{Proposition}
\newtheorem{definition}[theorem]{Definition}
\newtheorem{fact}[theorem]{Fact}
\newtheorem*{claim*}{Claim}
\newtheorem*{conjecture*}{Conjecture}
\newtheorem*{observation*}{Observation}
\newtheorem*{question*}{Question}
\begin{document}

\title{Invariant Radon measures and minimal sets for subgroups of $\text{Homeo}_+(\mathbb{R})$ }

\date{\today}

\author{Hui Xu \& Enhui Shi \& Yiruo Wang}

\begin{abstract}
 Let $G$ be a subgroup of $\text{Homeo}_+(\mathbb{R})$ without crossed elements. We show the equivalence among three items: (1) existence of $G$-invariant Radon measures
 on $\mathbb R$; (2) existence of  minimal closed subsets of $\mathbb R$; (3)  nonexistence of infinite towers covering the whole line. For a nilpotent subgroup $G$ of $\text{Homeo}_+(\mathbb{R})$, we show that $G$ always has an invariant Radon measure and a minimal closed set if every element of $G$ is $C^{1+\alpha} (\alpha>0$); a counterexample of $C^1$ commutative subgroup of $\text{Homeo}_+(\mathbb{R})$ is constructed.
\end{abstract}
\maketitle
\noindent\textbf{Keywords}: Invariant Radon measure, minimal set, crossed elements, nilpotent group.\\
\textbf{MSC}(2010): 37E05, 37C85.

\section{Introduction}

In the theory of dynamical systems, the following two facts are well known:

(1) if $G$ is a group consisting of homeomorphisms on a compact metric space $X$, then $G$ has
a minimal set $K$ in $X$, that is $K$ is minimal among all nonempty $G$-invariant closed subsets with respect to the inclusion relation on sets;

(2) if $G$ is an amenable group consisting of homeomorphisms  on a compact metric space $X$, then $G$ has an invariant Borel probability  measure on $X$.

In general, these two results do not hold if $X$ is not compact. However, if the topology of $X$ is very constrained and the acting group $G$ possesses some specified structures, then the
existence of invariant Radon measures (Borel measures which are finite on every compact set) or minimal sets can still be true, even if $X$ is noncompact.

When $X$ is the real line $\mathbb R$ and $\Gamma$ is a finitely generated virtually nilpotent group, Plante obtained the following theorem in \cite{Plante}.

\begin{theorem}\label{plante theorem}
If $\Gamma$ is a finitely generated virtually nilpotent subgroup of $\text{Homeo}_+(\mathbb{R})$, then $\Gamma$ preserves a Radon measure on the line.
\end{theorem}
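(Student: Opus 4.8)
The plan is to reconstruct Plante's measure as a vague limit of renormalised orbit‑counting measures along a growth (Følner) sequence, after two cheap reductions. First pass from virtually nilpotent to nilpotent: if $N\leq\Gamma$ has finite index and is nilpotent and $\mu$ is an $N$-invariant Radon measure, then for left coset representatives $\gamma_1,\dots,\gamma_m$ of $N$ in $\Gamma$ the measure $\sum_{i=1}^m(\gamma_i)_*\mu$ is again Radon (a finite sum of Radon measures), is nonzero, and is $\Gamma$-invariant, because $N$-invariance of $\mu$ makes $(\gamma_i)_*\mu$ depend only on the coset $\gamma_iN$ while left translation by $g\in\Gamma$ merely permutes these cosets. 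So I may assume $\Gamma$ is finitely generated nilpotent. By the Bass--Guivarc'h formula $\Gamma$ has polynomial growth, hence for any finite symmetric generating set the word balls $B_n$ satisfy $\#B_{n+c}/\#B_n\to1$ for every fixed $c$; since $B_{n-|g|}\subseteq gB_n\subseteq B_{n+|g|}$, this gives $\#(gB_n\triangle B_n)/\#B_n\to0$, i.e. $(B_n)$ is a Følner sequence. If $\Gamma$ fixes a point $p\in\mathbb R$, then $\delta_p$ is invariant; so I assume there is no global fixed point.

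\emph{Construction.} Choose $x_0\in\mathbb R$ and $h\in\Gamma$ with $x_0<hx_0$, set $K_0:=[x_0,hx_0]$, and put $\nu_n:=\sum_{g\in B_n}\delta_{gx_0}$. Since $e\in B_n$ and $x_0\in K_0$, we have $\nu_n(K_0)\geq1$, so $\mu_n:=\nu_n/\nu_n(K_0)$ is a Radon measure with $\mu_n(K_0)=1$. Granting the estimate below, $\sup_n\mu_n(K)<\infty$ for every compact $K$, so a subsequence of $(\mu_n)$ converges vaguely to a Radon measure $\mu$ on $\mathbb R$; I claim $\mu$ is nonzero and $\Gamma$-invariant, which finishes the proof.

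\emph{The verification, in one estimate.} For $f\in\Gamma$ and $\psi\in C_c(\mathbb R)$, reindexing $f_*\mu_n$ yields
\[
\int\psi\,d(f_*\mu_n)-\int\psi\,d\mu_n=\frac{1}{\nu_n(K_0)}\Big(\sum_{g\in fB_n\setminus B_n}\psi(gx_0)-\sum_{g\in B_n\setminus fB_n}\psi(gx_0)\Big),
\]
and every $g$ on the right lies in the shell $n-|f|<|g|\leq n+|f|$. Hence both the asymptotic $\Gamma$-invariance of $(\mu_n)$ in the vague topology and the bound $\sup_n\mu_n(K)<\infty$ follow once one shows that for every compact interval $K$ there are a constant $C_K$ and $\varepsilon_n(K)\to0$ with
\[
\#\{g\in B_n:gx_0\in K\}\leq C_K\,\nu_n(K_0)\quad\text{and}\quad\#\{g\in B_n\setminus B_{n-c}:gx_0\in K\}\leq\varepsilon_n(K)\,\nu_n(K_0)
\]
for each fixed $c$. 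To prove this I would write $E_J:=\{g\in\Gamma:gx_0\in J\}$ as the disjoint union of left cosets $g_yD$, $y\in J\cap\Gamma x_0$, of the stabiliser $D:=\operatorname{Stab}_\Gamma(x_0)$; cover $K\cap\overline{\Gamma x_0}$ by finitely many $\Gamma$-translates of $K_0$, so that $E_K$ lies in boundedly many $\gamma$-translates of $E_{K_0}$ with $\#(B_{n-|\gamma|}\cap E_{K_0})\leq\#(B_n\cap\gamma E_{K_0})\leq\#(B_{n+|\gamma|}\cap E_{K_0})$; and finally use the exact polynomial asymptotics $\#(B_m\cap gD)\sim c_g\,m^{q}$ available in finitely generated nilpotent groups (Pansu), which keeps $\#(B_m\cap E_K)$ and $\nu_m(K_0)$ of the same order $m^{q}$ but forces the shell counts down to order $m^{q-1}$. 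Given the estimate, $\mu$ is Radon; moreover $\mu$ is nonzero, since testing $\mu_n(K_0)=1$ against a $\psi\in C_c(\mathbb R)$ with $\psi\geq0$ and $\psi\equiv1$ on $K_0$ gives $\int\psi\,d\mu\geq1$; and the displayed identity, whose right side tends to $0$ by the second inequality (with $\operatorname{supp}\psi$ in place of $K$), gives $f_*\mu=\mu$ for all $f\in\Gamma$.

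\emph{Main obstacle.} The hard part is exactly this two-part estimate: showing that the averaged orbit measures neither let all their mass escape to infinity nor concentrate it without bound on a compact set. This is the only step where the hypothesis is genuinely used --- subexponential (here polynomial) growth is what makes the boundary shells $B_{n+c}\setminus B_{n-c}$ negligible relative to $B_n$, and, through the coset decomposition of $E_K$, relative to the reference count $\nu_n(K_0)$. A secondary nuisance is that $\overline{\Gamma x_0}$ need not be all of $\mathbb R$: the coverings above must be carried out inside the orbit closure, the complementary gaps containing no orbit points and hence contributing nothing to any $\nu_n$.
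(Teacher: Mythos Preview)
The paper does not prove this theorem; it is simply quoted from Plante \cite{Plante} as background, with no argument supplied. Your sketch is in fact a reconstruction of Plante's own proof: pass to a nilpotent finite-index subgroup, use polynomial growth (Bass--Guivarc'h) to obtain a F{\o}lner sequence of word balls, average orbit Dirac masses, and extract a vague limit. The outline is sound and you have correctly located the crux in the two-part counting estimate.

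Two places where the sketch needs more care. First, Pansu's theorem gives $\#B_m\sim c\,m^d$ for the whole group; the coset asymptotics $\#(B_m\cap gD)\sim c_g m^q$ you invoke are not an immediate consequence and require a separate argument (undistortion of subgroups in nilpotent groups, or a direct counting lemma). Plante's original estimate is more elementary and bypasses this. Second, the step ``cover $K\cap\overline{\Gamma x_0}$ by finitely many $\Gamma$-translates of $K_0$'' is not automatic from compactness: the intervals $gK_0=[gx_0,ghx_0]$ can be arbitrarily short, so an accumulation of orbit points could in principle defeat a finite cover. The standard remedy is to choose $h$ \emph{fixed-point free}---one must first argue such an $h$ exists in a finitely generated nilpotent group without global fixed point (the elements with fixed points form a proper subgroup)---so that the $h^mK_0$ tile all of $\mathbb{R}$ and any compact $K$ meets only finitely many of them; then $\nu_n(h^mK_0)\le\nu_{n+|m|\,|h|}(K_0)$ gives the bound directly.
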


 Here, $\text{Homeo}_+(\mathbb{R})$ means the orientation preserving homeomorphism group on $\mathbb R$.
The following theorem appears in A. Navas' book (see Prop. 2.1.12 in \cite{Na1}).

\begin{theorem}\label{navas theorem}
Every finitely generated subgroup of $\text{Homeo}_+(\mathbb{R})$ admits a nonempty minimal invariant closed set.
\end{theorem}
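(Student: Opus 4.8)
The plan is to reduce everything to a Zorn's lemma argument on the poset $\mathcal P$ of nonempty closed $\Gamma$-invariant subsets of $\mathbb R$, ordered by reverse inclusion: its maximal elements are exactly the minimal closed invariant sets, so it suffices to verify the Zorn hypothesis. The only obstruction is that a decreasing chain of nonempty closed subsets of $\mathbb R$ may have empty intersection, so the whole argument hinges on producing, under the hypotheses, a \emph{compact transversal}: a compact $K\subseteq\mathbb R$ meeting every member of $\mathcal P$. Given such a $K$, for any chain $\mathcal C\subseteq\mathcal P$ the sets $A\cap K$ ($A\in\mathcal C$) form a nested family of nonempty compact sets, hence have a common point; thus $\bigcap\mathcal C\ne\emptyset$, and being closed and $\Gamma$-invariant it is an upper bound for $\mathcal C$. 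So a maximal element exists.

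First I would clear away the degenerate configurations. If $\Gamma$ has a global fixed point $p$, then $\{p\}$ is minimal. If some orbit $\Gamma x$ is bounded above, then $\sup\overline{\Gamma x}$ lies in $\overline{\Gamma x}$ and is fixed by every $g\in\Gamma$, since $g$ is an order-preserving bijection of $\overline{\Gamma x}$; so $\Gamma$ has a global fixed point, and symmetrically if some orbit is bounded below. Hence we may assume $\mathrm{Fix}(\Gamma)=\emptyset$ and that every orbit — indeed every nonempty closed $\Gamma$-invariant set, since it contains an orbit — is unbounded in both directions. In particular there is no compact invariant set, so the transversal $K$ must be produced indirectly.

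Two cases give a transversal coming from a single element. (a) If some $g\in\Gamma$ has no fixed point, then after replacing $g$ by $g^{-1}$ we may assume $g>\mathrm{id}$; the iterates $g^n(y)$ then increase to $+\infty$ and decrease to $-\infty$, so the intervals $g^n([y,g(y)])$, $n\in\mathbb Z$, tile $\mathbb R$ and $K:=[y,g(y)]$ meets every nonempty $\Gamma$-invariant set. (b) If some $g\in\Gamma$ has a nonempty fixed set bounded on at least one side, say bounded below with $a:=\min\mathrm{Fix}(g)$, then any nonempty closed invariant $A\subseteq[a,\infty)$ would have $\inf A\in A$ fixed by all of $\Gamma$, impossible; hence $A$ meets $(-\infty,a)$, and as $g$ preserves $(-\infty,a)$ and has no fixed point there, the $g$-orbit of a point of $A$ in $(-\infty,a)$ converges monotonically to $a$, forcing $a\in A$. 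So $K:=\{a\}$ works; when $\mathrm{Fix}(g)$ is compact and nonempty one argues likewise at $\min\mathrm{Fix}(g)$ and $\max\mathrm{Fix}(g)$ and takes $K:=\mathrm{Fix}(g)$.

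There remains the core case: $\mathrm{Fix}(\Gamma)=\emptyset$, $\Gamma$ has no fixed-point-free element, and the fixed set of every nontrivial element of $\Gamma$ is nonempty and unbounded in both directions — equivalently, every element has support a disjoint union of bounded intervals. Here finite generation must finally be used in an essential way: writing $\Gamma=\langle g_1,\dots,g_k\rangle$, the sets $\mathrm{Fix}(g_i)$ are closed, unbounded on both sides, with empty common intersection, so their bounded support components overlap enough to cover $\mathbb R$. My plan is to show this configuration is vacuous: fixing a basepoint, I would analyze how the support components of the generators interlace near it and, recording on each relevant component whether $g_i$ lies above or below the identity, splice the generators into a single word that is strictly increasing everywhere — a fixed-point-free element — contradicting the standing assumption. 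Making this splicing rigorous, in particular ruling out the cancellations that could reintroduce a fixed point, is the step I expect to be the main obstacle; when $\Gamma$ happens to normalize a nontrivial translation one can instead pass to the induced circle action, where a minimal set exists by compactness, and pull it back.
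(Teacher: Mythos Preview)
The paper does not give its own proof of this theorem; it is quoted from Navas' book \cite[Prop.~2.1.12]{Na1} as background, so there is no in-paper argument to compare against. I can only assess your proposal on its own merits and against the standard proof it cites.

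Your Zorn's-lemma framework and the reduction to finding a compact transversal $K$ are exactly the right strategy, and your treatments of the degenerate case and of cases (a) and (b) are correct. The genuine gap is the ``core case'': you do not prove it, you only announce a plan to manufacture a fixed-point-free word from the generators and explicitly flag the splicing step as ``the main obstacle.'' Your fallback via an induced circle action applies only when $\Gamma$ normalizes a translation, which is not implied by the standing hypotheses. As written, the proof is incomplete.

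The gap is also unnecessary. The case distinction can be dropped entirely in favour of a single two-line construction of the transversal, and this is precisely where finite generation enters in the standard argument. With $S=\{g_1^{\pm1},\dots,g_k^{\pm1}\}$ set $M(x)=\max_{s\in S}s(x)$; then $M$ is continuous, increasing, and $M(x)>x$ for all $x$ once $\mathrm{Fix}(\Gamma)=\emptyset$ (if $M(x)=x$ then $g_i(x)\le x$ and $g_i^{-1}(x)\le x$ for every $i$, forcing $g_i(x)=x$). Take $K=[0,M(0)]$. Given a nonempty closed invariant $A$, it is unbounded below as you argued, so $p:=\sup\bigl(A\cap(-\infty,0]\bigr)\in A$ with $p\le 0$; choose $s\in S$ with $s(p)>p$. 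Then $s(p)\in A$, and $s(p)\le M(p)\le M(0)$ by monotonicity, while $s(p)>0$ by maximality of $p$. Hence $s(p)\in A\cap K$, and your Zorn argument finishes the proof. Finite generation is used exactly to make $M$ a finite maximum, so that $M(0)<\infty$ and $K$ is compact. This replaces your cases (a), (b), and the unresolved core case simultaneously.
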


In this paper, we are interested in non-finitely generated subgroups of $\text{Homeo}_+(\mathbb{R})$
and get the following theorem.

\begin{theorem}\label{main theorem 1}
Let $G$ be a subgroup of $\rm{Homeo}_{+}(\mathbb{R})$ without crossed elements. Then the following items are equivalent:
\begin{enumerate}
  \item [(1)] there exists a $G$-invariant Radon measure;
  \item [(2)] there exists a nonempty closed minimal set;
  \item [(3)] there does not exist any infinite tower $\{(I_i, f_i)\}_{i=1}^\infty$ such that
  $\bigcup_{i=1}^\infty I_i=\mathbb{R}$.
\end{enumerate}
\end{theorem}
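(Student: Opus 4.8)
The plan is to establish the cyclic chain of implications $(1)\Rightarrow(3)\Rightarrow(2)\Rightarrow(1)$.

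\emph{$(1)\Rightarrow(3)$.} Argue by contraposition. Suppose $\{(I_i,f_i)\}_{i=1}^\infty$ is an infinite tower with $\bigcup_{i=1}^\infty I_i=\mathbb R$, and let $\mu$ be a $G$-invariant Radon measure. Since the $I_i$ cover $\mathbb R$ and a Radon measure is locally finite, $\mu$ cannot be zero, so it assigns positive mass to some bounded subinterval, and by the covering property this subinterval meets the ``base'' of one of the towers $(I_i,f_i)$. Unwinding the definition of a tower, the forward iterates of that base under $f_i$ are pairwise disjoint, all contained in the bounded interval $I_i$, and all of the same positive $\mu$-mass; hence $\mu(I_i)=\infty$, contradicting local finiteness. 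Thus no infinite tower covering $\mathbb R$ exists.

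\emph{$(3)\Rightarrow(2)$.} Let $\mathcal K$ be the family of nonempty closed $G$-invariant subsets of $\mathbb R$, ordered by reverse inclusion; a nonempty closed minimal set is exactly a maximal element of $\mathcal K$. By Zorn's lemma it suffices to show every chain in $\mathcal K$ has an upper bound, i.e. that a nested family of nonempty closed $G$-invariant sets has nonempty intersection. Since $\mathbb R$ is second countable, one reduces to a decreasing sequence $F_1\supseteq F_2\supseteq\cdots$; if $\bigcap_n F_n=\emptyset$, then the $F_n$ eventually leave every compact set, and the heart of the argument is to convert this escape at infinity into an infinite tower $\{(I_i,f_i)\}$ with $\bigcup_i I_i=\mathbb R$, contradicting (3). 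This is where the hypothesis that $G$ has no crossed elements is indispensable: it forces the asymptotic dynamics of $G$ to be sufficiently ``one‑sided'' that the escaping closed sets can be organized into an exhausting chain of intervals $I_i$ together with elements $f_i\in G$ pushing $\overline{I_{i-1}}$ off itself inside $I_i$ — precisely the data of a tower. Having excluded $\bigcap_n F_n=\emptyset$, Zorn's lemma produces a minimal element of $\mathcal K$.

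\emph{$(2)\Rightarrow(1)$.} Let $K$ be a nonempty closed minimal $G$-set. Collapse each connected component of $\mathbb R\setminus K$ to a point via a monotone continuous surjection $\pi\colon\mathbb R\to J$, where $J$ is a point, a bounded interval, a half‑line, or $\mathbb R$; this semiconjugates the $G$-action on $\mathbb R$ to a minimal action on $J$ by orientation‑preserving homeomorphisms, with $K$ mapping onto $J$ (or onto a single fixed point). If $J$ is a point, a Dirac mass at the corresponding fixed point of $K$ is a $G$-invariant, trivially Radon, measure. Otherwise the action on $J$ is minimal and, after a small check, still has no crossed elements; by a Hölder‑type argument such an action is semiconjugate to an action of a subgroup of $(\mathbb R,+)$ by translations, which preserves Lebesgue measure $\lambda$. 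Transporting $\lambda$ back through this semiconjugacy and through $\pi$ yields a $G$-invariant Borel measure on $\mathbb R$; it is well defined because the collapsed points form a countable $\lambda$-null set, and it is Radon because $\pi$ sends compact sets to $\lambda$-finite compact subsets of $J$. This gives (1) and closes the cycle.

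\emph{Main obstacle.} The crux is the implication $(3)\Rightarrow(2)$: manufacturing an explicit infinite tower covering $\mathbb R$ from a decreasing sequence of nonempty closed invariant sets with empty intersection. Without the no‑crossed‑elements hypothesis the escape at infinity could be genuinely two‑sided and no tower in the required sense need appear, so the real work lies in exploiting that hypothesis to pin down the asymptotic structure — which is presumably why the notion of tower is formulated the way it is. The implication $(2)\Rightarrow(1)$ rests on the companion structural fact that crossing‑free minimal actions on an interval are, after collapsing gaps, semiconjugate to translation actions.
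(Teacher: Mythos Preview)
Your step $(1)\Rightarrow(3)$ is correct and coincides with the paper's argument.

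Your step $(2)\Rightarrow(1)$ is a genuine alternative to the paper's route (the paper instead shows $(2)\Rightarrow(3)$ by a short argument and then puts all the work into $(3)\Rightarrow(1)\wedge(2)$). Your idea is essentially right but needs a patch: when the minimal set $K$ is a discrete bi--infinite orbit, collapsing the closures of the gaps of $K$ sends all of $\mathbb R$ to a single point, yet $K$ contains \emph{no} fixed point of $G$ (think of $G=\langle x\mapsto x+1\rangle$, $K=\mathbb Z$), so ``Dirac mass at the corresponding fixed point of $K$'' is not available. This case is easily handled separately by taking counting measure on $K$. With that adjustment (and the verification that the collapsed action is free, which follows from minimality together with the no--crossed--elements hypothesis), your $(2)\Rightarrow(1)$ goes through.

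The real problem is $(3)\Rightarrow(2)$. Your Zorn argument requires that, under~(3), every decreasing sequence of nonempty closed $G$--invariant sets has nonempty intersection; you then propose to deduce this by converting a sequence with empty intersection into an infinite tower covering~$\mathbb R$. But this implication is simply false. Take $G=\{\mathrm{id}\}$: there are no crossed elements and no infinite towers whatsoever (a tower needs $f_i\ne\mathrm{id}$), so (3) holds; yet $F_n=[n,\infty)$ is a decreasing sequence of nonempty closed $G$--invariant sets with empty intersection. Thus ``escape to infinity'' of invariant sets does not, by itself, manufacture a tower, and the Zorn hypothesis can fail even when a minimal set obviously exists. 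The no--crossed--elements assumption alone does not rescue this: what is missing is any mechanism for producing the elements $f_i$ with $\mathrm{Fix}(f_i)\cap I_i=\mathrm{End}(I_i)$ from the sets $F_n$.

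The paper's approach to $(3)\Rightarrow(2)$ is quite different and avoids Zorn entirely. One introduces the normal subgroup $\Gamma$ generated by all $f\in G$ with $\mathrm{Fix}(f)\neq\emptyset$; the key lemma (whose proof is exactly where the tower gets built) shows that under~(3) one has $\mathrm{Fix}(\Gamma)\neq\emptyset$. Then $G/\Gamma$ acts freely on the space obtained by collapsing the complementary intervals of $\mathrm{Fix}(\Gamma)$, so by H\"older it is semiconjugate to a translation group, and a minimal set is read off from this structure (with a Cantor--Bendixson argument on $\mathrm{Fix}(\Gamma)$ in the countable case). In other words, the tower construction lives inside the proof that $\mathrm{Fix}(\Gamma)\neq\emptyset$, not inside a Zorn--style compactness argument.
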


Note that the condition having crossed elements implies the existence of free sub-semigroup (\cite[Lemma 2.2.44]{Na1}). In particular, a nilpotent group has no crossed elements, since it contains no free sub-semigroup. For a nilpotent group $G$ of $\text{Homeo}_+(\mathbb{R})$, if it does not preserve any Radon measures, we can construct a better infinite tower. This together with a beautiful generalization of Kopell's Lemma due to A. Navas implies the following theorem.
\begin{theorem}\label{main corollary}
For every $\alpha>0$, every nilpotent $C^{1+\alpha}$  subgroup of $\text{Homeo}_+(\mathbb{R})$
has an invariant Radon measure and has a minimal closed invariant set.
\end{theorem}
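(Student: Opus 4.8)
The plan is to argue by contradiction: suppose $G$ is a nilpotent $C^{1+\alpha}$ subgroup of $\mathrm{Homeo}_+(\mathbb{R})$ that admits no invariant Radon measure. Since $G$ is nilpotent it contains no free sub-semigroup, hence (by the cited \cite[Lemma 2.2.44]{Na1}) it has no crossed elements, so Theorem 1.3 applies. By the equivalence (1)$\Leftrightarrow$(3) there, the absence of an invariant Radon measure forces the existence of an infinite tower $\{(I_i,f_i)\}_{i=1}^\infty$ with $\bigcup_i I_i = \mathbb{R}$.

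First I would upgrade this tower to a ``better'' one adapted to the nilpotent structure. The key point is that the intervals $I_i$ exhaust $\mathbb{R}$ and the maps $f_i\in G$ push points off to the ends of the line; using nilpotency of $G$ (working down the lower central series, or using that nilpotent groups acting on the line have controlled dynamics — e.g. a global fixed point for the action on each level, or a translation-number-type homomorphism that must vanish on the relevant subgroup) one can arrange the tower so that a single element $g\in G$ plays the role of $f_i$ on nested fundamental domains accumulating at a common endpoint, producing an element $g$ with a sequence of fixed points $x_n \to x_\infty$ from one side such that $g$ has no fixed point in $(x_n, x_{n+1})$ for infinitely many $n$, and another element $h\in G$ commuting (or commuting up to lower central terms) with $g$ that moves these fixed points around. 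This is the step I expect to be the main obstacle: extracting from the combinatorial/topological tower a genuinely $C^{1+\alpha}$-rigid configuration — i.e. getting the hypotheses of a Kopell-type lemma to hold — requires carefully using the nilpotent word structure to produce two commuting diffeomorphisms, one of which has a nontrivial sequence of isolated fixed points while the other fails to fix them.

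Once such a configuration is in place, I would invoke A. Navas' generalization of Kopell's Lemma (the result alluded to in the paragraph before the theorem): if $f, g$ are commuting $C^{1+\alpha}$ diffeomorphisms of a half-open interval $[a,b)$ and $f$ has no fixed point in $(a,b)$, then $g$ cannot have an isolated fixed point in $(a,b)$ — more precisely, the centralizer dynamics are incompatible with the tower's fixed-point pattern. Applying this to the pair $(g,h)$ (or to successive pairs coming down the central series) contradicts the structure of the ``better'' tower constructed in the previous step.

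This contradiction shows $G$ must preserve a Radon measure, giving the first conclusion. For the second conclusion, the existence of an invariant Radon measure means item (1) of Theorem 1.3 holds, so by the equivalence (1)$\Leftrightarrow$(2) there, $G$ has a nonempty minimal closed invariant set. Thus both assertions of Theorem 1.5 follow, the only substantive work being the promotion of the abstract tower to a Kopell-incompatible configuration using nilpotency, and the correct bookkeeping along the lower central series so that the $C^{1+\alpha}$ regularity is applied to genuinely commuting pairs.
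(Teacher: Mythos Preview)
Your overall architecture---argue by contradiction, use Theorem~\ref{main theorem 1} to reduce to the tower situation, upgrade the tower via nilpotency, then kill it with a Kopell-type rigidity---matches the paper. But the Kopell step as you describe it would not go through, and this is a genuine gap, not just a bookkeeping issue.

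You state Navas' generalization as a two-element statement: commuting $C^{1+\alpha}$ diffeomorphisms $f,g$ with $f$ fixed-point-free on $(a,b)$ force $g$ to have no isolated fixed point there. That is essentially the \emph{classical} Kopell lemma, and it is a $C^2$ (or $C^{1+\mathrm{bv}}$) phenomenon; it fails for general $C^{1+\alpha}$ with small $\alpha$. The result the paper actually invokes (Navas, \cite[Prop.~2.8]{Na2}) is genuinely different: one needs $k$ diffeomorphisms $h_1,\dots,h_k$ permuting a family of intervals $\{L_{l_1,\dots,l_k}\}$ indexed by $\mathbb{Z}^k$ and disposed in \emph{lexicographic} order, and the conclusion is that $h_1,\dots,h_{k-1}$ cannot all be $C^{1+\alpha}$ provided $\alpha(1+\alpha)^{k-2}\ge 1$. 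The number $k$ must grow as $\alpha\to 0$, so no fixed pair of commuting elements can do the job for all $\alpha>0$.

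Consequently the ``upgrade'' step is also different from what you sketch. The paper does not try to manufacture a commuting pair. Instead (Lemma~\ref{tower for nilpotent}) it descends the lower central series to find subgroups $A\vartriangleleft B$ with $[B,B]\le A$ and a tower $(I_i,h_i)$ with all $h_i\in B$ and $I_0$ an $A$-invariant interval. Because $\langle A,h_1,\dots,h_k\rangle/A$ is free abelian of rank $\le k$, every image $g(I_0)$ is uniquely of the form $h_1^{l_1}\cdots h_k^{l_k}(I_0)$, and the nesting $I_0\subset I_1\subset\cdots\subset I_k$ produces exactly the $\mathbb{Z}^k$-lexicographic family required by Navas' lemma. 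Choosing $k$ large enough that $\alpha(1+\alpha)^{k-2}\ge 1$ then gives the contradiction. So the nilpotency is used not to get commutation of individual elements, but to get an abelian quotient $B/A$ in which the tower maps become independent translations---this is what your outline is missing.
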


We should note that there is no requirement of finite generation or even countability for the group appearing in Theorem \ref{main theorem 1} and
Theorem \ref{main corollary}. This is the key point that differs from Theorem \ref{plante theorem} and Theorem \ref{navas theorem}.

As a supplement of Theorem \ref{main corollary}, we construct in Section 5 a $C^1$ commutative subgroup of  $\text{Homeo}_+(\mathbb{R})$,
which has neither invariant Radon measure nor minimal closed set.

\section{Notions and Auxiliary Lemmas}
In this section, we give some definitions and lemmas which will be used in the proof of the main theorems.\\

Let $G$ be a subgroup of $\text{Homeo}_+(\mathbb{R})$. For $x\in \mathbb{R}$, we denote the {\it orbit} of $x$ by $Gx\equiv\{g(x):g\in G\}$. For $g\in G$, we denote by $\text{Fix}(g)$ the set of fixed points of $g$ and denote by $\text{Fix}(G)$ the set of global fixed points of $G$, i.e. ${\rm{Fix}}(G)=\{x\in\mathbb{R}:\forall g\in G, g(x)=x\}$.

\begin{definition}\label{cross}
Tow elements $f,g\in \text{Homeo}_+(\mathbb{R})$ are called {\it crossed} if there exists an interval $(a,b)$ such that one of $f,g$, saying $f$, $\text{Fix}(f)\cap [a,b]=\{a,b\}$ while $g$ sends either $a$ or $b$ into $(a,b)$. Here we allow the cases $a=-\infty$ or $b=+\infty$.
\end{definition}

 We recall the following  facts which will be used frequently.

\begin{fact}\label{fact1}
 Let $G$ be a subgroup of $\text{Homeo}_+(\mathbb{R})$ and $F=\{f\in G: \text{Fix}(f)\neq \emptyset\}$. Then for any $f\in F$ and $g\in G$, $gfg^{-1}\in F$.
\end{fact}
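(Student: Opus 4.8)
The plan is to use the elementary principle that conjugation by a homeomorphism transports fixed-point sets; concretely, I would show $\text{Fix}(gfg^{-1}) = g(\text{Fix}(f))$ for all $f,g \in \text{Homeo}_+(\mathbb{R})$. Granting this, if $f \in F$ then $\text{Fix}(f) \neq \emptyset$, so $g(\text{Fix}(f)) \neq \emptyset$, so $\text{Fix}(gfg^{-1}) \neq \emptyset$, which says precisely that $gfg^{-1} \in F$.

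To establish the identity, first take any $x \in \text{Fix}(f)$ and set $y = g(x)$; then
\[
(gfg^{-1})(y) = g\bigl(f\bigl(g^{-1}(g(x))\bigr)\bigr) = g(f(x)) = g(x) = y,
\]
so $y \in \text{Fix}(gfg^{-1})$, which gives $g(\text{Fix}(f)) \subseteq \text{Fix}(gfg^{-1})$. Conversely, if $(gfg^{-1})(y) = y$, then composing with $g^{-1}$ on the left and $g$ on the right yields $f(g^{-1}(y)) = g^{-1}(y)$, i.e. $g^{-1}(y) \in \text{Fix}(f)$ and hence $y \in g(\text{Fix}(f))$; this is the reverse inclusion. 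In particular $F$ is invariant under conjugation by every element of $G$ — though note that $F$ itself is typically not a subgroup, since the composition of two maps each having a fixed point need not have one.

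There is essentially no obstacle here: the statement is purely formal once one recalls how conjugation acts on fixed points, and for the conclusion $gfg^{-1}\in F$ one does not even need the full identity, only the inclusion $g(\text{Fix}(f)) \subseteq \text{Fix}(gfg^{-1})$. The one thing to watch is the bookkeeping of the order of composition, namely that it is $g$ (not $g^{-1}$) for which $\text{Fix}(gfg^{-1}) = g(\text{Fix}(f))$; the orientation-preserving hypothesis plays no role whatsoever, the argument being valid for arbitrary bijections of an arbitrary set.
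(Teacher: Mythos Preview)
Your proof is correct and follows essentially the same approach as the paper: the paper simply picks $x\in\text{Fix}(f)$ and checks $(gfg^{-1})(g(x))=g(x)$, which is precisely your forward inclusion $g(\text{Fix}(f))\subseteq\text{Fix}(gfg^{-1})$. You additionally prove the reverse inclusion to obtain the full identity $\text{Fix}(gfg^{-1})=g(\text{Fix}(f))$, which is true and worth knowing but, as you yourself note, not needed for the conclusion.
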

\begin{proof}
For any $x\in \text{Fix}(f)$, $(gfg^{-1})(g(x))=g(f(x))=g(x)$.
Thus $g(x)\in \text{Fix}(gfg^{-1})$ and hence $gfg^{-1}\in F$.
\end{proof}

\begin{fact}\label{fact2}
 Let $f$ and $g$ be in $\text{Homeo}_+(\mathbb{R})$ which are not crossed and $\text{Fix}(f)\cap [\alpha,\beta]=\{\alpha,\beta\}$. If $g(\alpha)>\alpha$ or $g(\beta)<\beta$, then $g((\alpha, \beta))\cap(\alpha, \beta)=\emptyset$.
 \end{fact}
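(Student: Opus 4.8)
The plan is to argue by contradiction, showing that any overlap between $g((\alpha,\beta))$ and $(\alpha,\beta)$ would force $g$ to carry one of the endpoints $\alpha,\beta$ into the open interval $(\alpha,\beta)$ --- which is exactly the configuration forbidden by the hypothesis that $f$ and $g$ are not crossed, with $f$ playing the role of the map whose fixed-point set pins down $[\alpha,\beta]$.

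First I would record the elementary observation that, since $g\in\text{Homeo}_+(\mathbb{R})$ is increasing, $g((\alpha,\beta))=(g(\alpha),g(\beta))$ (with the usual conventions when $\alpha=-\infty$ or $\beta=+\infty$), and that two open intervals $(\alpha,\beta)$ and $(g(\alpha),g(\beta))$ intersect if and only if $g(\alpha)<\beta$ and $\alpha<g(\beta)$. Then, assuming for contradiction that $g((\alpha,\beta))\cap(\alpha,\beta)\neq\emptyset$, we have $g(\alpha)<\beta$ and $g(\beta)>\alpha$. Now split according to the hypothesis: if $g(\alpha)>\alpha$, then together with $g(\alpha)<\beta$ this gives $g(\alpha)\in(\alpha,\beta)$; since $\text{Fix}(f)\cap[\alpha,\beta]=\{\alpha,\beta\}$, this says precisely that $f$ and $g$ are crossed via the interval $(\alpha,\beta)$, contradicting the hypothesis. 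Symmetrically, if $g(\beta)<\beta$, then together with $g(\beta)>\alpha$ we get $g(\beta)\in(\alpha,\beta)$, and again $f,g$ are crossed, a contradiction. Hence $g((\alpha,\beta))\cap(\alpha,\beta)=\emptyset$ in either case.

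I do not expect a genuine obstacle here; the only points needing a little care are the bookkeeping when an endpoint is infinite (in that case the corresponding condition $g(\alpha)>\alpha$ or $g(\beta)<\beta$ cannot occur, so that branch is vacuous) and invoking the ``not crossed'' hypothesis in the correct direction, namely with $f$ as the element whose fixed-point set isolates $[\alpha,\beta]$ and $g$ as the element doing the displacement.
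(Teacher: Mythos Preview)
Your argument is correct. The paper actually states Fact~\ref{fact2} without proof, so there is no proof in the paper to compare against; your contradiction argument via the definition of crossed elements is exactly the intended (and essentially the only) way to see it.
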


\begin{fact}\label{fact3}
Let $G$ be a subgroup of $\text{Homeo}_+(\mathbb{R})$. For any $x\in\mathbb{R}$, set
  \begin{displaymath}
  \alpha:=\inf\{Gx\},~~~\beta:=\sup\{Gx\}.
  \end{displaymath}
  Then either $\alpha=-\infty$ (resp. $\beta=+\infty$) or $\alpha\in \rm{Fix}(G)$ (resp. $\beta\in \rm{Fix}(G)$).
\end{fact}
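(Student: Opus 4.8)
The plan is to derive the statement from a single elementary observation: every $g\in\text{Homeo}_+(\mathbb{R})$, being a strictly increasing continuous bijection of $\mathbb{R}$, commutes with taking infima and suprema of subsets. Precisely, I would first record that for any nonempty $A\subseteq\mathbb{R}$ which is bounded below one has $g(\inf A)=\inf g(A)$, and symmetrically $g(\sup A)=\sup g(A)$ whenever $A$ is bounded above. One inequality is pure monotonicity: $a\ge\inf A$ for every $a\in A$ forces $g(a)\ge g(\inf A)$, hence $\inf g(A)\ge g(\inf A)$. For the reverse, pick $a_n\in A$ with $a_n\to\inf A$; continuity of $g$ gives $g(a_n)\to g(\inf A)$, and since each $g(a_n)\in g(A)$ this yields $\inf g(A)\le g(\inf A)$.

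The one point that needs a little care is that $\inf A$ need not belong to $A$, so one cannot simply assert ``$g$ sends the minimum of $A$ to the minimum of $g(A)$''; the approximating sequence together with continuity of $g$ is exactly what repairs this, and I do not anticipate any other obstacle. (Equivalently, one may note that an increasing homeomorphism of $\mathbb{R}$ carries the closure $\overline{A}$ onto $\overline{g(A)}$ in an order-preserving fashion, hence sends $\inf\overline{A}$ to $\inf\overline{g(A)}$.)

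With this in hand the conclusion is immediate. Apply the observation to $A=Gx$. Since $G$ is a group, $g(Gx)=Gx$ for every $g\in G$; in particular $Gx$ is bounded below precisely when $\alpha>-\infty$, and in that case
\[
g(\alpha)=g(\inf Gx)=\inf g(Gx)=\inf Gx=\alpha
\]
for every $g\in G$, so $\alpha\in\text{Fix}(g)$ for all $g\in G$, i.e. $\alpha\in\text{Fix}(G)$. This is exactly the dichotomy ``$\alpha=-\infty$ or $\alpha\in\text{Fix}(G)$''. The assertion for $\beta=\sup Gx$ follows verbatim with suprema in place of infima (or by applying the $\alpha$-case to the conjugate of the $G$-action under $t\mapsto -t$).
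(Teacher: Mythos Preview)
Your argument is correct and is essentially the same as the paper's. The paper's proof is simply a terser variant: assuming $\alpha\neq-\infty$, it notes that $g(\alpha)\ge\alpha$ for every $g\in G$ (which is your continuity/sequence step applied to the $G$-invariant set $Gx$), and then applies the same inequality to $g^{-1}$ to obtain $g(\alpha)\le\alpha$. Your version just packages these two inequalities as the single identity $g(\inf A)=\inf g(A)$ for increasing homeomorphisms before specialising to $A=Gx$.
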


\begin{proof}
We may assume that $\alpha\neq -\infty$. Then for any $g\in G$,
\begin{displaymath}
g(\alpha)\geq \alpha,~~\text{and}~~g^{-1}(\alpha)\geq \alpha\Longrightarrow g(\alpha)\leq \alpha .
\end{displaymath}
Hence $g(\alpha)=\alpha$. It is similar for $\beta$.
\end{proof}

\begin{definition}
If $\{I_i\}_{i=1}^\infty$ is a sequence of closed intervals such that $I_1\subsetneq I_2\subsetneq ...$, and $\{f_i\}_{i=1}^\infty$ is
a sequence of orientation preserving homeomorphisms on $\mathbb R$ such that ${\rm Fix}(f_i)\cap I_i={\rm End}(I_i)$ for each $i$, where ${\rm End}(I_i)$ denotes the endpoint set of interval $I_i$, then
we call the sequence of pairs $\{(I_i, f_i)\}_{i=1}^\infty$ an {\it infinite tower}.
\end{definition}

\begin{lemma}\label{infinity tower}
Let $H$ be a subgroup of $\text{Homeo}_+(\mathbb{R})$ without crossed elements.
Suppose $H$ has no infinite tower $\{(I_i, f_i)\}_{i=1}^\infty$ such that $\bigcup_{i=1}^{\infty}I_i=\mathbb{R}$.
If for every $f\in H$, $\text{Fix}(f)\neq \emptyset$,  then $\text{Fix}(H)\neq\emptyset$.
\end{lemma}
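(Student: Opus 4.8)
We want to show: if $H$ has no infinite tower covering $\mathbb{R}$ and every $f\in H$ has a fixed point, then $\mathrm{Fix}(H)\neq\emptyset$. The strategy is to assume $\mathrm{Fix}(H)=\emptyset$ and construct an infinite tower $\{(I_i,f_i)\}$ with $\bigcup I_i=\mathbb{R}$, contradicting the hypothesis. The natural way to build such a tower is to produce an increasing, eventually-exhausting sequence of closed intervals $I_i$, each of which is a maximal interval on which some $f_i\in H$ has no interior fixed point (so that $\mathrm{Fix}(f_i)\cap I_i=\mathrm{End}(I_i)$).

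**Key steps.**
First, fix a basepoint, say $0\in\mathbb{R}$. Since $\mathrm{Fix}(H)=\emptyset$, there is some $f_1\in H$ with $f_1(0)\neq 0$; WLOG $f_1(0)>0$ (otherwise replace $f_1$ by $f_1^{-1}$). Since $\mathrm{Fix}(f_1)\neq\emptyset$, let $I_1=[a_1,b_1]$ be the (possibly one-sided infinite) connected component of $\mathbb{R}\setminus\mathrm{Fix}(f_1)$ containing $0$, with its endpoints adjoined; then $\mathrm{Fix}(f_1)\cap I_1=\mathrm{End}(I_1)$, so $(I_1,f_1)$ is the first stage. Now I want to extend: the crucial point is that no-crossed-elements forces the fixed-point sets of different elements to be ``compatible''. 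Concretely, using Fact \ref{fact2}: if $g\in H$ and $g$ moves an endpoint of $I_1=[a_1,b_1]$ outward (say $g(b_1)>b_1$ or $g(a_1)<a_1$), then on the interval $(a_1,b_1)$, $g$ cannot have created a new obstruction of the wrong type — more to the point, one shows that if $H$ had all its elements fixing $a_1$ and $b_1$, then $a_1$ or $b_1$ would be a candidate global fixed point, and in fact by Fact \ref{fact3} applied to suitable orbits, the failure of $\mathrm{Fix}(H)=\emptyset$ must manifest ``at the boundary.'' So there is $g\in H$ with, say, $g(b_1)>b_1$ (the case $g(a_1)<a_1$ is symmetric). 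Then let $f_2$ be an element whose no-interior-fixed-point interval $I_2$ strictly contains $I_1$: one takes the component of $\mathbb{R}\setminus\mathrm{Fix}(f_2)$ containing $I_1$, after checking (via Fact \ref{fact2} and the non-crossing hypothesis) that $f_2:=$ an appropriate product or conjugate of $f_1,g$ has no fixed point in the open interval $(a_1,b_1)$ while pushing the boundary strictly outward. Iterating, we get $I_1\subsetneq I_2\subsetneq\cdots$. Finally, one must rule out that $\bigcup I_i$ is a proper (necessarily open, $H$-invariant) subinterval $J\subsetneq\mathbb{R}$: if it were, then an endpoint of $J$ would be a global fixed point of $H$ by Fact \ref{fact3} (since the construction shows $Hx$ accumulates at $\partial J$ for $x\in J$), again contradicting $\mathrm{Fix}(H)=\emptyset$. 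Hence $\bigcup I_i=\mathbb{R}$, producing the forbidden infinite tower.

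**Main obstacle.**
The delicate part is the inductive extension step: given the maximal no-interior-fixed-point interval $I_i$ of $f_i$, I must manufacture $f_{i+1}\in H$ whose analogous interval $I_{i+1}$ strictly contains $I_i$ — in particular $f_{i+1}$ must have no fixed point in the open interval $\mathrm{int}(I_i)$. This is where the non-crossing hypothesis does the real work: it prevents an element $g$ that moves an endpoint of $I_i$ outward from having fixed points inside $I_i$ (this is essentially the content of Fact \ref{fact2}), so that a suitable $g$ (or $g f_i^{\pm 1}$, chosen so its fixed set meets $\mathrm{int}(I_i)$ in nothing and its boundary-behaviour is outward on at least one side) serves as $f_{i+1}$. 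Guaranteeing that such a $g$ exists at \emph{every} stage — rather than the orbit of endpoints stabilizing — is exactly where one invokes $\mathrm{Fix}(H)=\emptyset$ together with Fact \ref{fact3}: if at some stage no element of $H$ moved either endpoint of $I_i$ outward, those endpoints (or the $\sup$/$\inf$ of the relevant orbit) would be fixed by all of $H$. Bookkeeping the two-sided nature of the extension (pushing left and right alternately, or in each step extending on whichever side is not yet blocked) and making sure the $I_i$ are genuinely nested closed intervals with the correct endpoint condition is the remaining technical content; none of it is deep, but it needs care.
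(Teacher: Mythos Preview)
Your overall strategy matches the paper's: assume $\mathrm{Fix}(H)=\emptyset$ and build an infinite tower covering $\mathbb{R}$, using non-crossing (Fact~\ref{fact2}) to show that any $g\in H$ moving an endpoint of $I_i$ has no fixed point inside $I_i$, so the component of $\mathbb{R}\setminus\mathrm{Fix}(g)$ containing $I_i$ serves as $I_{i+1}$. (Incidentally, no ``product or conjugate'' is needed; $g$ itself is the next $f_{i+1}$.)

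The genuine gap is your exhaustion argument. You assert that $J=\bigcup I_i$ is ``necessarily $H$-invariant,'' so that a finite endpoint of $J$ would lie in $\mathrm{Fix}(H)$ via Fact~\ref{fact3}. But nothing in the construction makes $J$ invariant: each $I_i$ is tied only to the single element $f_i$, and there is no reason $h(I_i)\subset J$ for arbitrary $h\in H$. Knowing merely that at each stage \emph{some} element pushes an endpoint outward does not prevent $b_i\nearrow b<\infty$ with $b\notin\mathrm{Fix}(H)$; the increments could shrink to zero while some other $h\in H$ would have jumped past $b$ had it been chosen. The paper closes this gap by using the full strength of Fact~\ref{fact3}: since $\mathrm{Fix}(H)=\emptyset$, the $H$-orbit of any point is unbounded in both directions, so one may choose $f_{i+1}$ to send the relevant endpoint past a \emph{prescribed} value. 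Concretely, the paper picks $f_2$ with $f_2(\alpha_1)>\max\{\alpha_1,2\}$ (forcing $\beta_2>2$), then $f_3$ so that $\alpha_3<-3$, and continues alternating so that $\beta_{2i}>2i$ and $\alpha_{2i-1}<-(2i-1)$; exhaustion is then immediate, with no appeal to invariance of $J$.

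A smaller omission: you allow $I_1$ to be ``possibly one-sided infinite,'' but an infinite tower requires compact intervals. The paper handles this (and the analogous boundedness issue at every inductive step) with the same mechanism: if only $\alpha_1$ is known to be finite, take $g$ with $g(\alpha_1)>\alpha_1$; then Fact~\ref{fact2} gives $g((\alpha_1,\beta_1))\cap(\alpha_1,\beta_1)=\emptyset$, whence $\beta_1\le g(\alpha_1)<\infty$.
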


\begin{proof}
Assume to the contrary that $\text{Fix}(H)=\emptyset$. Choose an $f_1\neq \text{id}\in H$.  
 Then $\emptyset\not=\text{Fix}(f_1)\subsetneq\mathbb{R}$.  Take a connected component $(\alpha_1, \beta_1)$ of  $\mathbb{R}\setminus \text{Fix}(f_1)$.

We claim that $-\infty<\alpha_1<\beta_1<+\infty$. In fact, since $\text{Fix}(f_1)\neq\emptyset$,  at least one of $\alpha_1,\beta_1$ is finite. We may assume that $\alpha_1\in\mathbb{R}$. Since $\text{Fix}(H)=\emptyset$, by Fact \ref{fact3}, there exists $f_2\in H\setminus\{f_1\}$ such that
$f_2(\alpha_1)>\max\{\alpha_1, 2\}$. Since $f_1$ and $f_2$ are not crossed,  $f_2((\alpha_1, \beta_1))\cap (\alpha_1, \beta_1)=\emptyset$ by Fact \ref{fact2}.  Therefore, $\beta_1\leq f_2(\alpha_1)<+\infty$.

Set $\alpha_2=\inf\{f_2^i(\alpha_1): i\in\mathbb Z\}$ and $\beta_2=\sup\{f_2^i(\alpha_1): i\in\mathbb Z\}$. Then either $\alpha_2\not=-\infty$ or
$\beta_2\not=+\infty$ by the assumption that $\text{Fix}(f)\neq \emptyset$ for every $f\in H$. Similar to the argument of the previous claim,
we have $\alpha_2\in\mathbb R$ and $\beta_2 \in \mathbb R$. Then $\alpha_2<\alpha_1<\beta_1<\beta_2$ and $\text{Fix}(f_2)\cap [\alpha_2,\beta_2]=\{\alpha_2,\beta_2\}$ and $\beta_2>2$.

Similar to the above arguments, we get $\alpha_3, \beta_3 \in \mathbb R$ and $f_3\in H$ such that $\alpha_3<\alpha_2<\beta_2<\beta_3$, and $\text{Fix}(f_3)\cap [\alpha_3,\beta_3]=\{\alpha_3,\beta_3\}$, and $\alpha_3<-3$.

 Continuing this process, we obtain a nested closed intervals $[\alpha_1,\beta_1]\subsetneq [\alpha_2,\beta_2]\subsetneq\cdots$ and a sequence $f_1,f_2,\cdots\in H$ such that
\begin{displaymath}
\text{Fix}(f_i)\cap [\alpha_i,\beta_i]=\{\alpha_i,\beta_i\}, i=1,2,...,
\end{displaymath}
and $\alpha_{2i-1}<-(2i-1)$ and $\beta_{2i}>2i$ for each $i>0$.
Set $I_i= [\alpha_i,\beta_i]$. Then $\{(I_i, f_i)\}_{i=1}^\infty$ is an infinite tower such that $\bigcup_{i=1}^{\infty}I_i=\mathbb{R}$, which contradicts the hypothesis.
\end{proof}

\begin{lemma}\label{generator fix}
Let $F$ be a subset of $\text{Homeo}_+(\mathbb{R})$ and let $H=\langle F\rangle$ be the group generated by $F$. Suppose $H$ has no crossed elements.
If for every $f\in F$, $\text{Fix}(f)\neq \emptyset$,  then $\text{Fix}(g)\neq\emptyset$ for every $g\in H$.
\end{lemma}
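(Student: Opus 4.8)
The plan is to reduce to Lemma \ref{infinity tower}: that lemma says that for a crossed-element-free subgroup $H$ in which every element has a fixed point, if there is no line-covering infinite tower then $\mathrm{Fix}(H)\neq\emptyset$. So the strategy is first to establish that $H$ has no line-covering infinite tower, and then to run an inductive/localization argument down the real line. First I would argue by contradiction: suppose some $g\in H$ has $\mathrm{Fix}(g)=\emptyset$. Writing $g$ as a word in finitely many elements $f_1,\dots,f_n\in F$ and their inverses, I may replace $F$ by the finite set $\{f_1,\dots,f_n\}$ and $H$ by the finitely generated group $H_0=\langle f_1,\dots,f_n\rangle$; it still has no crossed elements, and $g\in H_0$ still has empty fixed-point set, so it suffices to treat the finitely generated case. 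For that case the key point is that a finitely generated subgroup of $\mathrm{Homeo}_+(\mathbb R)$ cannot contain a line-covering infinite tower: if $\{(I_i,f_i)\}$ were such a tower with $\bigcup I_i=\mathbb R$, then picking generators $h_1,\dots,h_n$, each $h_j$ moves some point, hence (since the $I_i$ exhaust $\mathbb R$ and their endpoints march off to $\pm\infty$) for large $i$ some generator would have to carry an endpoint of $I_i$ strictly inside $I_i$ — contradicting that $f_i$ has $\mathrm{Fix}(f_i)\cap I_i=\mathrm{End}(I_i)$ and that $f_i,h_j$ are not crossed (via Fact \ref{fact2}). Hence by Lemma \ref{infinity tower}, once we know every element of $H_0$ has a fixed point we would get $\mathrm{Fix}(H_0)\neq\emptyset$, but that does not immediately contradict $\mathrm{Fix}(g)=\emptyset$ — so this route needs the induction below rather than a one-line finish.

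More carefully, I would set up an induction on the number of generators $n$ of $H_0=\langle f_1,\dots,f_n\rangle$, the base case $n=1$ being the hypothesis itself. For the inductive step, let $H'=\langle f_1,\dots,f_{n-1}\rangle$; by induction every element of $H'$ has a fixed point, and (by the tower-exclusion argument above applied to $H'$) $\mathrm{Fix}(H')\neq\emptyset$. Now take $g=w(f_1,\dots,f_n)\in H_0$ and suppose $\mathrm{Fix}(g)=\emptyset$, so $g$ has no fixed point; WLOG $g(x)>x$ for all $x$. The plan is to localize on a connected component $J=(a,b)$ of $\mathbb R\setminus\mathrm{Fix}(H')$ (possibly $a=-\infty$ or $b=+\infty$); by Fact \ref{fact3} the endpoints that are finite lie in $\mathrm{Fix}(H')$. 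I then want to analyze how $f_n$ and the orbit of $J$ under $f_n$ interact: since $f_n,h$ are not crossed for each $h\in H'$, and since $f_n$ itself has a fixed point, one shows the $H_0$-orbit of $J$ is a union of pairwise disjoint translates of $J$-like intervals on which $g$ acts either trivially on endpoints or shifts them monotonically, producing again an infinite tower covering $\mathbb R$ — contradicting the tower-exclusion. The main obstacle, and the part requiring the most care, is exactly this combinatorial bookkeeping: controlling the relative position of $\mathrm{Fix}(f_n)$, the components of $\mathbb R\setminus\mathrm{Fix}(H')$, and the intervals generated by iterating, so that the non-crossing hypothesis (Fact \ref{fact2}) can be applied at each stage to force disjointness and monotone escape to $\pm\infty$.

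An alternative, perhaps cleaner, packaging: prove directly that a crossed-element-free finitely generated $H_0$ in which every generator has a fixed point cannot contain a line-covering infinite tower (the argument sketched above), and then invoke Lemma \ref{infinity tower} together with a short argument that if some $g\in H_0$ had $\mathrm{Fix}(g)=\emptyset$ then — after passing to the action on a minimal component and using that all generators fix points there — one could still build such a tower. I expect the write-up to proceed by first proving the tower-exclusion statement as a standalone claim, then deducing Lemma \ref{generator fix} by contradiction via Lemma \ref{infinity tower}. The essential difficulty throughout is that "every generator has a fixed point" is a much weaker hypothesis than "every element has a fixed point," so the whole force of the proof is in bootstrapping from the former to the latter using non-crossing, and the infinite-tower obstruction is the natural certificate for when this bootstrapping fails.
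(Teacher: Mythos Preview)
Your plan dramatically overcomplicates what is, in the paper, essentially a two-line argument, and the part you flag as the ``main obstacle'' is a genuine gap. The paper's proof is simply this: it suffices to show that whenever $g_1,g_2\in H$ each have a fixed point, so does $g_1g_2$ (then induct on word length over $F\cup F^{-1}$). If $\mathrm{Fix}(g_1g_2)=\emptyset$, then in particular $\mathrm{Fix}(g_1)\cap\mathrm{Fix}(g_2)=\emptyset$; any $a\in\mathrm{Fix}(g_1)$ then lies in a component $(p,q)$ of $\mathbb R\setminus\mathrm{Fix}(g_2)$ with at least one finite endpoint, say $q$, and since $g_1$ fixes $a\in(p,q)$ but not $q$, either $g_1$ or $g_1^{-1}$ sends $q$ into $(p,q)$. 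Thus $g_2$ and $g_1^{\pm1}$ are crossed. That is the entire proof.

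Your route through Lemma~\ref{infinity tower} runs in the wrong direction: that lemma takes ``every element has a fixed point'' as a \emph{hypothesis} and outputs a \emph{global} fixed point, whereas here you need to produce per-element fixed points. Your induction on the number of generators tries to work around this, but the step you yourself identify as the crux --- passing from $\mathrm{Fix}(H')\neq\emptyset$ and $\mathrm{Fix}(f_n)\neq\emptyset$ to ``every element of $\langle H',f_n\rangle$ has a fixed point'' --- is exactly the two-element product statement above, and your tower-building sketch for it is neither complete nor clearly completable without that statement. The missing idea is the direct product argument; once you have it, the reduction to finite generation, the tower-exclusion claim, the appeal to Lemma~\ref{infinity tower}, and the induction are all unnecessary.
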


\begin{proof}
Since $H$ is generated by $F$, we need only to prove that for any $g_1\not=g_2\in H$, if $\text{Fix}(g_1)\not=\emptyset$ and $\text{Fix}(g_2)\not=\emptyset$,
then $\text{Fix}(g_1g_2)\not=\emptyset$. Otherwise, $\text{Fix}(g_2)\subset\mathbb R\setminus \text{Fix}(g_1)$ and
$\text{Fix}(g_1)\subset\mathbb R\setminus \text{Fix}(g_2)$. This clearly implies the existence of crossed elements in $H$, which is a contradiction.
\end{proof}

Recall that a subgroup $H$ of $\text{Homeo}_+(\mathbb{R})$ is said to act on $\mathbb R$ {\it freely}, if every non-identity element of $H$ has no fixed points.

\begin{lemma}[H\"{o}lder \cite{Na1} Proposition 2.2.29]\label{Holder}
 Every group acting freely by homeomorphisms of the real line is isomorphic to a subgroup of $(\mathbb{R},+)$.
\end{lemma}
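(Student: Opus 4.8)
This is Hölder's theorem, and I would prove it along the classical lines. Write $\Gamma$ for the group acting freely. First note that the action is automatically orientation preserving: an orientation reversing homeomorphism of $\mathbb R$ is strictly decreasing, so $x\mapsto g(x)-x$ decreases from $+\infty$ to $-\infty$ and must vanish somewhere, which is excluded by freeness; thus $\Gamma\le\mathrm{Homeo}_+(\mathbb R)$. The plan is then: (a) use freeness to equip $\Gamma$ with a natural bi-invariant total order which turns out to be Archimedean, and (b) invoke Hölder's algebraic theorem, that an Archimedean linearly ordered group embeds as an ordered group into $(\mathbb R,+)$.

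For (a): if $g\ne\mathrm{id}$ then $g(x)-x$ is continuous and nowhere zero on the connected line, hence of constant sign; call $g$ \emph{positive} when $g(x)>x$ for all $x$, and declare $g\succ h$ when $gh^{-1}$ is positive. For $g\ne h$ exactly one of $gh^{-1},\,hg^{-1}$ is positive, so $\succ$ is a total order; it is invariant under left and right translations because every element of $\Gamma$ is an increasing bijection. Fixing a basepoint $x_0$ one gets the clean description $g\succ h\iff g(x_0)>h(x_0)$ (for the nonobvious direction, $g(x_0)>h(x_0)$ gives $h^{-1}g(x_0)>x_0$, so $h^{-1}g$ is positive), and hence the injective orbit map $g\mapsto g(x_0)$ identifies $(\Gamma,\succ)$ with an ordered subset of $\mathbb R$. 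The order is Archimedean: given $\mathrm{id}\prec g$ and $\mathrm{id}\prec h$, if $g^n\not\succ h$ for every $n$ then $(g^n(x_0))_{n\ge 1}$ is strictly increasing and bounded above by $h(x_0)$, so it converges to some $\ell\in\mathbb R$, whence $g(\ell)=\ell$ by continuity — contradicting freeness.

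For (b): fix $g_0\succ\mathrm{id}$ and set $\tau(h)=\sup\{p/q:\ p\in\mathbb Z,\ q\in\mathbb Z_{>0},\ g_0^{p}\preceq h^{q}\}$ for $h\in\Gamma$. The Archimedean property makes this supremum a well-defined real number, with $\tau(g_0)=1$; the map $\tau$ is plainly order preserving, and it is injective because $\mathrm{id}\prec h$ forces $g_0\preceq h^{q}$ for some $q$, giving $\tau(h)\ge 1/q>0$. The one substantial step — and the real content of Hölder's theorem — is that $\tau$ is a homomorphism onto its image; this rests on the classical fact that an Archimedean linearly ordered group is necessarily abelian (a commutator estimate: the assumption that two positive elements fail to commute is incompatible with the Archimedean condition), after which the additivity $\tau(h_1h_2)=\tau(h_1)+\tau(h_2)$ follows from $(h_1h_2)^{q}=h_1^{q}h_2^{q}$ and the monotonicity of multiplication. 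I expect exactly this abelian‑ness/additivity step to be the main obstacle; the order‑theoretic and dynamical parts in (a) are short. With $\tau$ in hand, it is the required embedding $\Gamma\hookrightarrow(\mathbb R,+)$.
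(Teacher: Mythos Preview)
Your sketch is the standard proof of H\"older's theorem and is correct; the only point I would flag is that you defer the crucial step (``an Archimedean ordered group is abelian, hence $\tau$ is additive'') to a citation-in-words rather than carrying it out, but since this is a classical fact that you correctly identify, the argument is complete at the level of a sketch.

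As for comparison: the paper does not prove this lemma at all --- it is stated as a black box with a reference to Navas' book (Proposition 2.2.29), so there is no ``paper's own proof'' to compare against. Your write-up therefore goes beyond what the paper provides. If anything, the proof in Navas follows exactly the route you outline (free action $\Rightarrow$ Archimedean bi-ordering $\Rightarrow$ embedding in $(\mathbb{R},+)$ via the translation-number homomorphism), so your approach is the expected one.
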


\begin{lemma}\label{uncountable case}
Let $G$ be a subgroup of $\rm{Homeo}_{+}(\mathbb{R})$ and let $\Gamma=\{f\in G: \text{Fix}(f)\neq\emptyset\}$. Suppose $\Gamma$ is a normal subgroup of $G$. If $\text{Fix}(\Gamma)$ is uncountable, then there exist a $G$-invariant Radon measure on ${\mathbb R}$ and a nonempty minimal closed subset of $\mathbb R$.
\end{lemma}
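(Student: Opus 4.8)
The plan is to use the hypothesis that $\Gamma=\{f\in G:\mathrm{Fix}(f)\neq\emptyset\}$ is normal in $G$ together with the uncountability of $C:=\mathrm{Fix}(\Gamma)$. First I would record that $C$ is a closed $G$-invariant subset of $\mathbb{R}$: it is closed because it is an intersection of fixed-point sets, and it is $G$-invariant because $\Gamma$ is normal (if $x\in C$ and $g\in G$, then for any $f\in\Gamma$ we have $g^{-1}fg\in\Gamma$, so $g^{-1}fg(x)=x$, i.e.\ $f(g(x))=g(x)$). An uncountable closed subset of $\mathbb{R}$ has nonempty perfect kernel $P$ (the Cantor–Bendixson theorem), and since the scattered part is countable and $G$ acts by homeomorphisms, $P$ is again closed and $G$-invariant. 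Now the action of $G$ on $P$ (or, more conveniently, on the induced ordered set obtained by collapsing the closures of the complementary intervals) is an action by order-preserving bijections on a set that is, up to countable ambiguity, a Cantor set.

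Next I would produce the $G$-invariant Radon measure. Collapsing each bounded complementary component of $C$ to a point yields a continuous monotone surjection $\pi:\mathbb{R}\to L$ onto a new copy of the line (or an interval) on which $G$ still acts by orientation-preserving homeomorphisms, and on which the image of $C$ is now all of $L$; crucially, the image of $\Gamma$ now acts trivially on $L$, so the induced $G$-action on $L$ factors through $G/\Gamma$. By Lemma~\ref{generator fix} the quotient $G/\Gamma$ acts on $L$ with the property that no nontrivial element has a fixed point — indeed an element of $G\setminus\Gamma$ has empty fixed-point set in $\mathbb{R}$, hence its image has empty fixed-point set in $L$ — so $G/\Gamma$ acts \emph{freely} on $L$. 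By Hölder's theorem (Lemma~\ref{Holder}) $G/\Gamma$ is isomorphic to a subgroup of $(\mathbb{R},+)$, and a free action of an abelian group conjugate to a group of translations carries an invariant Radon measure (the pushforward of Lebesgue measure under the conjugacy, or directly the measure built from the Hölder translation-number homomorphism). Pulling this measure back through $\pi$ gives a $G$-invariant Radon measure on $\mathbb{R}$, after checking it is finite on compact sets — which holds because $\pi$ is proper onto its image and the measure on $L$ is Radon.

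For the minimal closed invariant set, I would argue on $L$ with its free $G/\Gamma$-action: such an action of a subgroup of $(\mathbb{R},+)$ either has a global fixed point (impossible here unless the group is trivial, the case $G=\Gamma$, in which case $C$ itself, or rather $\mathrm{Fix}(G)$, is a nonempty closed invariant set and any point of it is minimal) or the unique minimal set is determined by the translation-number structure: if the image group is a nontrivial subgroup of $\mathbb{R}$ it is either discrete (so generated by a single translation-like element, whose minimal set is a discrete orbit) or dense (so every orbit is dense and the whole of $L$, or a canonically associated Cantor set inside it, is minimal). In each case I pull the minimal set of $L$ back under $\pi$ and intersect appropriately with $C$ to get a nonempty closed $G$-minimal subset of $\mathbb{R}$. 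The main obstacle I anticipate is the bookkeeping around the collapsing map $\pi$: one must verify that $\pi$ is well defined and continuous (the complementary intervals of $C$ are permuted by $G$, so their closures collapse consistently), that the induced $G$-action on $L$ is genuinely by homeomorphisms and factors through $G/\Gamma$, and that properness of $\pi$ transfers the Radon and minimality properties back to $\mathbb{R}$ without introducing non-closed or non-invariant sets. Everything else reduces to Hölder's theorem and the elementary classification of subgroups of $(\mathbb{R},+)$.
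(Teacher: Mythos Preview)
Your approach is essentially the paper's: collapse the complementary intervals of $C=\mathrm{Fix}(\Gamma)$ to obtain a quotient line $L$ on which $G/\Gamma$ acts, argue this action is free, invoke H\"older's theorem, and then pull back both the Lebesgue measure and a minimal set. A few corrections are in order, though.

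First, the Cantor--Bendixson detour is superfluous: you never use the perfect kernel $P$ again, and the paper works directly with $C$. Uncountability of $C$ is what guarantees that the quotient $L$ is a genuine line rather than a countable set; that is the only place the hypothesis enters. Second, your citation of Lemma~\ref{generator fix} for freeness is misplaced --- that lemma propagates the existence of fixed points under a no-crossed-elements hypothesis, which is neither assumed in this lemma nor relevant here. The real argument is the one you give parenthetically, together with the observation (which you should make explicit) that a fixed point of $\bar g$ in $L$ pulls back to a $g$-invariant closed bounded interval, forcing $g$ to fix its endpoints and hence lie in $\Gamma$; this is exactly how the paper proves freeness.

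Finally, ``pull back and intersect appropriately with $C$'' conceals genuine work. In the dense case the minimal set on $L$ is all of $L$, and one must verify that $C$ itself is $G$-minimal; the paper does this by showing that each endpoint of a collapsed interval can be approached by an orbit from the correct side. In the discrete case one must choose the basepoint inside $\varphi^{-1}(\Lambda)\cap C$ and check that its $G$-orbit is already closed, using that $G/\Gamma$ acts freely and $\Gamma$ fixes the basepoint. Neither step is hard, but neither is automatic from ``intersect with $C$''.
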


\begin{proof}
If $G=\Gamma$, then each point $x$ in $\text{Fix}(\Gamma)$ is minimal and the Dirac measure $\delta_x$ is a $G$-invariant Radon measure on $\mathbb R$.
So, we may suppose that $\Gamma$ is a proper subgroup of $G$.

Let $\varphi$ be the map on $\mathbb R$ defined by collapsing the closure of each  component of $\mathbb{R}\setminus \text{Fix}(\Gamma)$ into a point.
Then the space $\varphi(\mathbb R)$  is homeomorphic to an interval $K$ (with or without endpoints).
Since $\Gamma$ is normal in $G$, $g(\text {Fix}(\Gamma))=\text{Fix}(\Gamma)$ for every $g\in G$. Thus $G/\Gamma$ naturally acts on $K$
by letting $g\Gamma.\varphi(x)=\varphi (g(x))$.

If $p$ is an end point of $K$, then $p$ is  $G/\Gamma$-invariant and hence $\varphi^{-1}(p)$ is a
connected $G$-invariant closed set $J$ in $\mathbb R$. Let $q$ be a boundary point of
$J$. Then $q$ is $G$-fixed, which contradicts the assumption that $\Gamma$ is properly contained in $G$.
So, $\varphi(\mathbb R)\cong\mathbb R$.

We claim that the action of $G/\Gamma$ on $\varphi(\mathbb R)$ is free. Otherwise, there is some $g\in G\setminus\Gamma$ and
$y\in \varphi(\mathbb R)$ such that $g\Gamma(y)=y$. Then $\varphi^{-1}(y)$ is a $g$-invariant closed interval, and hence
each point of the  boundary of $\varphi^{-1}(y)$  is $g$-fixed. This is a contradiction.

By the claim and H\"{o}lder's Lemma \ref{Holder}, this $G/\Gamma$ action on $\varphi(\mathbb R)$ is conjugate to translations on the line.
We may as well assume that $G/\Gamma$ are translations on $\mathbb R$. Then the Lebesgue measure $\lambda$ on $\mathbb R$ is a
$G/\Gamma$-invariant Radon measure.

Since $\varphi$ is increasing and continuous, it is well known that there is a unique continuous Radon measure
$\ell$ on $\mathbb R$ such that
\begin{equation*}
\ell([a,b])=\varphi(b)-\varphi(a)=\lambda (\varphi([a,b])).
\end{equation*}
 The $G$-invariance of $\ell$ can be seen from
 \begin{equation*}
\ell(g[a,b])=\lambda (g\Gamma\varphi([a,b]))=\lambda(\varphi([a,b]))=\ell([a,b]).
\end{equation*}
Then $\ell$ is the required Radon measure on $\mathbb R$.\\

To prove the existence of minimal sets, we discuss in two cases.\\

\noindent{\bf Case 1.} The $G/\Gamma$-action on $\mathbb R$ is minimal.\\
\indent Set $K=\mathbb{R}\setminus\bigcup_{x\in\mathbb{R}}{\rm{int}}(\varphi^{-1}(x))$. Firstly, $K$ is nonempty, since $\varphi$ is monotonic and $\varphi(\mathbb{R})\cong\mathbb{R}$. Furthermore, for any $x\in K$, $\varphi^{-1}(\varphi(x))$ has at most two points. We claim that $K$ is a minimal closed subset for $G$. For any $x, y\in K$, by the minimality of the $G/\Gamma$-action, there exists a sequence $(g_n)_{n=1}^{\infty}$ in $G$ such that
\[ g_n\Gamma\cdot\varphi(x)=\varphi(g_nx)\rightarrow \varphi(y),~\text{as } n\rightarrow\infty.\]  If $\varphi^{-1}(\varphi(y))=\{y\}$, then there is a subsequence of $(g_{n_k})_{k=1}^{\infty}$ such that $g_{n_k}(x)\rightarrow y $ as $k\rightarrow\infty$. If $\varphi^{-1}(\varphi(y))=\{y,y'\}$, then we may assume that $y<y'$ and that the choice of $(g_n)$ satisfying that $\varphi(g_nx)$ tends to $\varphi(y)$ from left. Then there is also a subsequence of $(g_{n_k})_{k=1}^{\infty}$ such that $g_{n_k}(x)\rightarrow y $ as $k\rightarrow\infty$. Therefore, $K$ is a nonempty minimal closed subset for $G$.\\

\noindent{\bf Case 2.} The $G/\Gamma$-action on $\mathbb R$ is not minimal. \\
\indent Noting that the action of $G/\Gamma$ on $\mathbb R$ consists of translations, $\Lambda\equiv \{g\Gamma(0): g\in G\}$ is discrete and minimal in this case. Take $x\in \varphi^{-1}(\Lambda)\cap \text{Fix}(\Gamma)$ and let $E=\overline{Gx}$. For any $y\in E$, there is an $\epsilon>0$ such that if $d(gx, y)<\epsilon$ then
$\varphi(gx)=\varphi(y)$ by the discreteness of $\Lambda$. Supposed that there exist $g,g'\in G$ such that $\varphi(gx)=\varphi(g'x)$. Then $g\Gamma \cdot \varphi(x)=g'\Gamma \cdot\varphi(x)$. Hence $g\Gamma=g'\Gamma$, by the freeness of the $G/\Gamma$-action. Thus $g(x)=g'(x)$, since $x\in{\rm{Fix}}(\Gamma)$. Therefore, $Gx\cap \varphi^{-1}(z)$ has at most one point, for every $z\in\mathbb{R}$.
Thus we have $gx=g'x$ for any $g, g'\in G$ with $d(gx, y)<\epsilon$ and $d(g'x, y)<\epsilon$. This forces $y=g(x)$ for some $g\in G$. Thus $E=Gx$ is
only a single orbit, which is clearly a nonempty minimal closed subset.
\end{proof}

\section{Proof of Theorem \ref{main theorem 1}}
In this section, we will prove Theorem \ref{main theorem 1}. We prove the theorem by showing $(1)\Longleftrightarrow (3)$ and $(2)\Longleftrightarrow (3)$.

\begin{claim*}[$(1)\Longrightarrow (3)$]
{\it For any subgroup $G$ of  $\rm{Homeo}_{+}(\mathbb{R})$ without crossed elements, if there exists a $G$-invariant Radon measure, then there does not exist an infinite tower covering the line.}
\end{claim*}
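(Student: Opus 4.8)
The plan is to argue by contradiction. Suppose $\mu$ is a (nontrivial) $G$-invariant Radon measure and that, all the same, $\{(I_i,f_i)\}_{i=1}^{\infty}$ is an infinite tower in $G$ — so $f_i\in G$, $I_1\subsetneq I_2\subsetneq\cdots$, and $\mathrm{Fix}(f_i)\cap I_i=\mathrm{End}(I_i)$ — with $\bigcup_{i}I_i=\mathbb{R}$. I will derive that $\mu$ is the zero measure. Since the $I_i$ increase to $\mathbb{R}$, all but finitely many are non-degenerate, so after deleting a finite initial segment I may assume each $I_i$ is non-degenerate; write $\mathrm{int}(I_i)$ for its interior. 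Because $f_i$ fixes the endpoints of $I_i$ and has no fixed point inside, the continuous function $f_i-\mathrm{id}$ has constant sign on $\mathrm{int}(I_i)$, and replacing $f_i$ by $f_i^{-1}$ when necessary — which changes neither $\mathrm{Fix}(f_i)$ nor the fact that $f_i\in G$ — I may assume $f_i(x)>x$ for all $x\in\mathrm{int}(I_i)$.

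First I would reduce to the case that every $I_i$ is compact. No $I_i$ equals $\mathbb{R}$, as $I_i\subsetneq I_{i+1}$. If some $I_i$ were a half-line, say $I_i=[a_i,+\infty)$, then $\mathrm{Fix}(f_i)\cap[a_i,+\infty)=\{a_i\}$; applying Definition~\ref{cross} with the interval $(a_i,+\infty)$ (the definition permits $b=+\infty$), we find that $f_i$ would be crossed with any $g\in G$ satisfying $g(a_i)>a_i$. As $G$ has no crossed elements, $g(a_i)\le a_i$ for every $g\in G$, and the same applied to $g^{-1}$ gives $a_i\le g(a_i)$; hence $a_i\in\mathrm{Fix}(G)$. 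But $I_{i+1}=[a_{i+1},+\infty)$ with $a_{i+1}<a_i$, and $\mathrm{Fix}(f_{i+1})\cap[a_{i+1},+\infty)=\{a_{i+1}\}$, so $f_{i+1}(a_i)\ne a_i$ — contradicting $a_i\in\mathrm{Fix}(G)$. The case $I_i=(-\infty,b_i]$ is symmetric. So each $I_i=[a_i,b_i]$ is compact, and $\bigcup_i I_i=\mathbb{R}$ forces $a_i\searrow-\infty$ and $b_i\nearrow+\infty$.

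The heart of the matter is a Poincar\'e-recurrence-type estimate carried out inside each $I_i$. Fix $i$, pick $x_i\in\mathrm{int}(I_i)$, and set $E_i=[x_i,f_i(x_i))$, a non-degenerate subinterval of $\mathrm{int}(I_i)$. Since $f_i$ fixes $a_i$ and $b_i$ and has no fixed point between them, the orbit $\{f_i^{n}(x_i):n\in\mathbb{Z}\}$ is strictly increasing with limits $a_i$ and $b_i$; hence the translates $f_i^{n}(E_i)=[f_i^{n}(x_i),f_i^{n+1}(x_i))$, $n\in\mathbb{Z}$, are pairwise disjoint with union $\mathrm{int}(I_i)$. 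By $G$-invariance, $\mu(f_i^{n}(E_i))=\mu(E_i)$ for every $n$, so
\begin{equation*}
\mu(\mathrm{int}(I_i))=\sum_{n\in\mathbb{Z}}\mu(E_i).
\end{equation*}
The left side is at most $\mu(I_i)<\infty$, since $I_i$ is compact and $\mu$ is Radon; a sum of infinitely many copies of the nonnegative number $\mu(E_i)$ is finite only if $\mu(E_i)=0$, so $\mu(\mathrm{int}(I_i))=0$.

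Finally, because the $I_i$ are nested with $a_i\searrow-\infty$ and $b_i\nearrow+\infty$, one checks that $\bigcup_i\mathrm{int}(I_i)=\mathbb{R}$; since these interiors increase and each is $\mu$-null, $\mu(\mathbb{R})=\lim_i\mu(\mathrm{int}(I_i))=0$, the promised contradiction. I expect the third step to be the main obstacle: the content is that an invariant Radon measure must give zero mass to a wandering fundamental domain whose forward iterates never leave a fixed compact set. It is precisely there that the compactness won in the second step is indispensable — over a half-line the iterates $f_i^{n}(E_i)$ escape to infinity and the estimate collapses — and that is where the hypothesis ``no crossed elements'' is really used.
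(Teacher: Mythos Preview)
Your proof is correct, but the mechanism you use in the core step is genuinely different from the paper's. The paper works \emph{across} levels: it picks $N$ with $\mu(\mathrm{int}(I_N))>0$, applies $f_{N+1}$ to $B=\mathrm{int}(I_N)$, and invokes the ``no crossed elements'' hypothesis (Fact~\ref{fact2}) to see that $B,f_{N+1}(B),f_{N+1}^2(B),\dots$ are pairwise disjoint subsets of $I_{N+1}$, forcing $\mu(I_{N+1})=\infty$. You instead work \emph{within} each level: you iterate $f_i$ on a fundamental domain $E_i\subset\mathrm{int}(I_i)$, where disjointness of the translates is automatic from $\mathrm{Fix}(f_i)\cap I_i=\mathrm{End}(I_i)$ alone, and conclude $\mu(\mathrm{int}(I_i))=0$ for every $i$, hence $\mu\equiv 0$. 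The payoff of your route is that the heart of the argument does not call on the no-crossed hypothesis at all; you only use it to exclude half-lines from the tower, and if one reads ``closed interval'' in the definition of tower as ``compact interval'' (as the paper implicitly does in its own proof) your argument shows $(1)\Rightarrow(3)$ for \emph{every} subgroup of $\mathrm{Homeo}_+(\mathbb{R})$. The paper's route, by contrast, is a couple of lines shorter once Fact~\ref{fact2} is available. Your extra care in ruling out half-lines is a point the paper does not address explicitly.
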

 Let $\mu$ be a $G$ invariant Radon measure on $\mathbb R$. If there exists an infinite tower $\{(I_i, f_i)\}_{i=1}^\infty$ such that $\bigcup_{n=1}^\infty I_n=\mathbb{R}$, then there is $N\in\mathbb{N^+}$ such that $\mu (\text{int}(I_{N}))>0$.  Let $B=\text{int}(I_{N})$. By the definition of infinite tower and Fact 2.3, we see that $B, f_{N+1}(B),f_{N+1}^2(B),... $ are pairwise disjoint and are all contained in $I_{N+1}$. Since $\mu$ is $G$-invariant, we have $$\mu(B)=\mu(f_{N+1}(B))=\mu(f_{N+1}^2(B))=\cdots$$
 and then
\begin{displaymath}
\mu(I_{N+1})\geq \sum_{i=0}^\infty \mu(f_{N+1}^i(B))=\infty,
\end{displaymath}
which contradicts the assumption that $\mu$ is a Radon measure.\\

\begin{claim*}[$(2)\Longrightarrow (3)$]
{\it For any subgroup $G$ of  $\rm{Homeo}_{+}(\mathbb{R})$ without crossed elments, if there exists a nonempty minimal closed subset, then there does not exist an infinite tower covering the line.}
\end{claim*}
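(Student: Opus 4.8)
The plan is to argue by contradiction. Assume $K$ is a nonempty minimal closed $G$-invariant set and that $\{(I_i,f_i)\}_{i=1}^\infty$ is an infinite tower with $\bigcup_i I_i=\mathbb{R}$. The idea is to produce a single point of $K$ which, by minimality, must have dense $G$-orbit in $K$, but whose $G$-orbit is in fact confined to the complement of an open set that meets $K$.

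First I would pick $x\in K$ and, since the $I_i$ strictly increase to $\mathbb{R}$, choose $N$ with $x\in\operatorname{int}(I_N)$; then $x\in\operatorname{int}(I_N)\subseteq\operatorname{int}(I_{N+1})=:J$, and by the definition of a tower $f:=f_{N+1}$ fixes the two endpoints of $J$ and has no fixed point in $J$, so replacing $f$ by $f^{-1}$ if necessary we may assume $f>\operatorname{id}$ on $J$. Next I would locate a distinguished point $p$ of $K$ on $\partial J$: the forward orbit $(f^k(x))_{k\ge 0}$ is strictly increasing and bounded above by $\sup J$, hence converges to a fixed point of $f$ in $\overline{J}$; as $f$ has no fixed point inside $J$ this limit is $\sup J$, and symmetrically the backward orbit decreases to $\inf J$. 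Since $I_{N+1}\subsetneq I_{N+2}\subseteq\mathbb{R}$, $I_{N+1}\neq\mathbb{R}$, so at least one endpoint of $J$ is finite; call it $p$. Being the limit of a sequence contained in the closed $G$-invariant set $K$ (recall $f\in G$ and $x\in K$), the point $p$ lies in $K$.

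The core step is to show that $g(p)\notin J$ for every $g\in G$, and this is exactly where ``no crossed elements'' is used. Say $p=\sup J=:b$ (the case $p=\inf J$ is symmetric). If $g(b)\ge b$, then $g(b)\notin J$. If $g(b)<b$, then, $g$ and $f$ being not crossed and $\operatorname{Fix}(f)\cap\overline{J}=\partial J$, Fact~\ref{fact2} applied to the pair $(f,g)$ yields $g(J)\cap J=\emptyset$; since $g$ is orientation preserving, the open interval $g(J)$ must then lie to the left of $J$, forcing $g(b)\le\inf J$, so again $g(b)\notin J$. (When $J$ is a half-line the same conclusion is read off Definition~\ref{cross} directly: $g(p)\in J$ would exhibit $f$ and $g$ as crossed; in that degenerate case $p$ is in fact a global fixed point of $G$, which already contradicts $\bigcup_i I_i=\mathbb{R}$, since a common endpoint of the strictly increasing $I_i$ cannot be an endpoint of all of them.) Hence $Gp\cap J=\emptyset$, so $\overline{Gp}\cap J=\emptyset$ because $J$ is open; but $p\in K$ and $K$ is minimal, so $\overline{Gp}=K$, whence $K\cap J=\emptyset$. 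This contradicts $x\in K$ together with $x\in\operatorname{int}(I_N)\subseteq J$.

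I expect the main obstacle to be the double role of the finite boundary point $p$ of the supporting interval $J$ of $f_{N+1}$: on one hand the one-dimensional dynamics of $f_{N+1}$ on $J$ push $x$ toward $p$, forcing $p\in K$; on the other hand the no-crossing hypothesis (via Fact~\ref{fact2}) forces the entire $G$-orbit of $p$ to stay out of $J$. Recognizing that these two facts together, through minimality, produce the contradiction is the conceptual point; the remaining items — that $x$ eventually lands in some $\operatorname{int}(I_N)$, that $\operatorname{int}(I_N)\subseteq\operatorname{int}(I_{N+1})$, the disjointness bookkeeping, and the half-line cases — are routine.
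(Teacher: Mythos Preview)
Your argument is correct and follows essentially the same route as the paper: locate an endpoint of one of the tower intervals inside the minimal set $K$ by iterating the associated $f$, then use the no-crossing hypothesis (via Fact~\ref{fact2}) to see that the $G$-orbit of that endpoint cannot re-enter the open interval, contradicting minimality. The paper works directly with $I_N=[a,b]$ and $f_N$ rather than passing to $I_{N+1}$, and phrases the contradiction as ``some $g$ must send $b$ into $(a,b)$, exhibiting a crossing'' instead of your contrapositive ``$Gp\cap J=\emptyset$''; your extra care about half-line cases is unnecessary since the $I_i$ are bounded closed intervals by definition, but none of this affects correctness.
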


Assume that $\Lambda$ is a nonempty closed minimal subset of $\mathbb{R}$. Fix a point $x\in \Lambda$. If  there exists an infinite tower $\{(I_i, f_i)\}_{i=1}^\infty$  such that $\bigcup_{n=1}^\infty I_n=\mathbb{R}$, then there exists $N\in \mathbb{N}^+$ such that $x\in \text{int}(I_N)$. Write $I_N=[a,b]$. We may assume that $f_N(x)>x$, otherwise replace $f_N$ by $f_{N}^{-1}$. Then $\lim_{n\rightarrow+\infty} f_N^n(x)=b$. Then $b\in \text{Fix}(f_N)\cap \Lambda$. Since $\Lambda$ is minimal, there must be some $g\in G$ sending $b$ to $(a,b)$. Then $f_N$ and $g$ are crossed, which contradicts the hypothesis.\\

\begin{claim*}[$(3)\Longrightarrow (1)+(2)$]
{\it For any subgroup $G$ of  $\rm{Homeo}_{+}(\mathbb{R})$ without crossed elements, if $G$ has no infinite tower covering the line, then there exists a
$G$-invariant Radon measure and  a nonempty minimal closed subset.}
\end{claim*}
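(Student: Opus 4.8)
The plan is to route the argument through the subgroup $\Gamma := \{g \in G : \text{Fix}(g) \ne \emptyset\}$ and its fixed-point set. First I would show $\Gamma$ is a normal subgroup: applying Lemma \ref{generator fix} with $F = \Gamma$ shows that every element of $\langle\Gamma\rangle$ has a fixed point, whence $\langle\Gamma\rangle = \Gamma$ is a subgroup, and Fact \ref{fact1} makes it normal in $G$. The group $\Gamma$ inherits the absence of crossed elements from $G$, and it admits no infinite tower covering $\mathbb{R}$ (any such tower of $\Gamma$-elements is, in particular, one made of $G$-elements). Since, by definition, every element of $\Gamma$ has a fixed point, Lemma \ref{infinity tower} gives $\text{Fix}(\Gamma) \ne \emptyset$; this set is closed and, by normality of $\Gamma$, is $G$-invariant.

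Next I would handle the easy cases. If $\Gamma = G$, then $\text{Fix}(G) = \text{Fix}(\Gamma) \ne \emptyset$, and any $x \in \text{Fix}(G)$ gives the minimal closed invariant set $\{x\}$ together with the $G$-invariant Radon measure $\delta_x$. So assume $\Gamma \subsetneq G$. Then $\text{Fix}(\Gamma)$ is unbounded above and below --- otherwise $G$, preserving $\text{Fix}(\Gamma)$ and the order of $\mathbb{R}$, would fix its supremum or infimum, forcing $\Gamma = G$ --- and $G/\Gamma$ acts on $\text{Fix}(\Gamma)$ by order-preserving bijections, the action being free since any nontrivial coset is represented by some $g \in G \setminus \Gamma$, which is fixed-point-free on all of $\mathbb{R}$. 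If moreover $\text{Fix}(\Gamma)$ is uncountable, I would simply invoke Lemma \ref{uncountable case}.

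The substantive case is $\text{Fix}(\Gamma)$ countably infinite. Here I would run the collapsing construction of Lemma \ref{uncountable case}: let $\varphi : \mathbb{R} \to K$ be the map collapsing the closure of each complementary component of $\text{Fix}(\Gamma)$ to a point, so that $G/\Gamma$ acts on $K$. If $K$ is a nondegenerate interval, the endpoint argument of Lemma \ref{uncountable case} --- an endpoint of $K$ would be $G/\Gamma$-fixed, hence its $\varphi$-preimage would be a $G$-invariant closed interval with a $G$-fixed boundary point, contradicting $\Gamma \subsetneq G$ --- shows $K \cong \mathbb{R}$, and the remainder of the proof of Lemma \ref{uncountable case} goes through unchanged: $G/\Gamma$ acts freely on $K \cong \mathbb{R}$, so by H\"older's Lemma \ref{Holder} it is conjugate to a group of translations, the associated measure is a $G$-invariant Radon measure, and a minimal set is produced by the dichotomy on whether the translation action is minimal. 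If instead $K$ is a single point, then $\mathbb{R}$ is exhausted by a chain of overlapping component-closures, which forces $\text{Fix}(\Gamma)$ to be a closed discrete subset of $\mathbb{R}$; being unbounded in both directions it has order type $\mathbb{Z}$, so $G$ acts through a homomorphism onto a nontrivial subgroup of $\mathbb{Z}$ (the order-automorphisms of $(\mathbb{Z},<)$) with kernel exactly $\Gamma$; then a single orbit $O$ of this action is a closed discrete $G$-invariant set consisting of one $G$-orbit --- hence a minimal closed invariant set --- and $\mu := \sum_{y \in O}\delta_y$ is a $G$-invariant Radon measure.

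The hard part will be making the countable case airtight, and specifically the possibility that the collapsing construction degenerates while $\text{Fix}(\Gamma)$ is not itself discrete, i.e. has an accumulation point. A countable closed $G$-invariant subset of $\mathbb{R}$ need not become discrete after finitely many passages to its set of accumulation points --- its Cantor--Bendixson rank may be a limit ordinal, so there is no "last" nonempty derivative to exploit --- and one must use the standing hypotheses (no crossed elements, combined with $\Gamma \subsetneq G$) to rule out such configurations, ultimately extracting in every case either a global $G$-fixed point or a bi-infinite closed discrete $G$-invariant set. The remaining verifications --- that the measure coming from Lebesgue measure through $\varphi$ is genuinely $G$-invariant and finite on compact sets, and that a single closed discrete $G$-orbit is minimal --- are routine.
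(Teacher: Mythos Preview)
Your overall architecture matches the paper's: reduce to the normal subgroup $\Gamma$ of elements with fixed points, show $\text{Fix}(\Gamma)\neq\emptyset$ via Lemmas \ref{infinity tower} and \ref{generator fix}, then split on whether $\text{Fix}(\Gamma)$ is uncountable (Lemma \ref{uncountable case}) or countable. The trouble is entirely in your handling of the countable case. Your dichotomy on the collapsed space $K$ --- either a single point or a nondegenerate interval --- is not exhaustive. When $\text{Fix}(\Gamma)$ is countable, the quotient $K$ is itself countable, so it is \emph{never} homeomorphic to $\mathbb{R}$; the branch ``$K\cong\mathbb{R}$, so Lemma \ref{uncountable case} goes through'' is simply empty here. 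And $K$ is a single point only when $\text{Fix}(\Gamma)$ is discrete. So the case ``$\text{Fix}(\Gamma)$ countable with accumulation points'' is not covered at all, which you yourself flag as ``the hard part.'' Nothing in the hypotheses rules this configuration out, so you cannot hope to show it does not arise.

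Your dismissal of the Cantor--Bendixson idea is also mistaken, and this is exactly what the paper exploits. You worry that the rank may be a limit ordinal so that ``there is no `last' nonempty derivative,'' but the paper avoids this by working inside a \emph{compact} window. Choose $g\in G\setminus\Gamma$ and $x_0\in\text{Fix}(\Gamma)$, set $x_n=g^n(x_0)$, and let $Y=\text{Fix}(\Gamma)\cap[x_{-1},x_2]$; this is countable and compact. Running the transfinite derived-set process $Y\supset Y_1\supset Y_2\supset\cdots$ (intersecting at limit stages), compactness guarantees that if $Y_\alpha\cap[x_0,x_1]\neq\emptyset$ for all $\alpha<\beta$ then $Y_\beta\cap[x_0,x_1]\neq\emptyset$ as well, so the first stage at which $Y_\bullet\cap[x_0,x_1]$ empties is a successor $\gamma+1$. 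Any $y\in Y_\gamma\cap[x_0,x_1]$ is then isolated in $Y_\gamma$, and since the $G$-action permutes the $Y_\alpha$'s (up to the $g$-translation back into the window), the orbit $Gy$ is discrete and closed; setting $\mu=\sum_{z\in Gy}\delta_z$ finishes both (1) and (2). This is the missing idea in your proposal.
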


\noindent\textbf{Case 1} $\text{Fix}(G)\neq \emptyset$. Then take any fixed point $x\in\text{Fix}(G)$, the Dirac measure $\delta_x$ is a $G$ invariant Radon measure and $\{x\}$ is a minimal closed subset.\\

\noindent\textbf{Case 2} $\text{Fix}(G)=\emptyset$. Let $F=\{f\in G: \text{Fix}(f)\neq\emptyset\}$ and let $\Gamma=\langle F\rangle$. By Lemma \ref{infinity tower}
and Lemma \ref{generator fix}, $\text{Fix}(\Gamma)\neq \emptyset$. Hence $\Gamma = F$ and $\Gamma$ is a proper normal subgroup of $G$, by Fact \ref{fact1}. Thus $\text{Fix}(\Gamma)$ is $G$-invariant.\\

\noindent\textbf{Subcase 2a} $\Gamma=\{\text {id}\}$. In this case, the $G$-action is free. By H\"{o}lder's Lemma \ref{Holder}, this action is conjugate to translations on the line.
Note that the Lebesgue measure is translation invariant and there always exists a minimal closed subset $M$ for any group consisting of translations. Pulling back the Lebesgue measure and the minimal subset $M$ by the conjugation, we obtained a $G$-invariant Radon measure and a minimal closed subset.\\

\noindent\textbf{Subcase 2b} $\Gamma$ is nontrivial and $\text{Fix}(\Gamma)$ is uncountable. This case has been proved in Lemma \ref{uncountable case}.\\

\noindent\textbf{Subcase 2c} $\Gamma$ is nontrivial and $\text{Fix}(\Gamma)$ is countable. Choose $g\in G\setminus \Gamma$ and $x_0\in \text{Fix}(\Gamma)$. We may assume that $g(x)>x$ for any $x\in\mathbb{R}$. Set $x_n=g^n(x_0), n\in\mathbb{Z}$.
 Since $\text{Fix}(g)=\emptyset$, $\{x_n: n\in\mathbb{Z}\}$ has no accumulating points. Set
 \begin{displaymath}
 X=[x_{-1},x_2],~~Y=\text{Fix}(\Gamma)\cap [x_{-1},x_2].
 \end{displaymath}
 Then  $Y$ and $Y\cap [x_{0},x_1] $ are countable compact nonempty subsets of $X$.\\

Define $Y_0$ to be the set of isolated points in $Y$, which is nonempty since $Y$ is countable and compact. Moreover, $Y_0\cap [x_0,x_1]$ is nonempty. Set $Y_1=Y\setminus Y_0$ which is a proper closed subset of $Y$. Define $Y_2=Y_1\setminus\{$isolated points in $Y_1$ under the subspace topology$\}$.
For an ordinal $\beta$, suppose that we have defined the nonempty closed subsets $Y_{\alpha}$ for all $\alpha<\beta$. If $\beta=\alpha+1$, define $Y_{\beta}=Y_{\alpha}\setminus\{$isolated points in $Y_{\alpha}$ under subspace topology$\}$. If $\beta$ is a limit ordinal, then define $Y_{\beta}=\bigcap_{\alpha<\beta} Y_{\alpha}$, which is nonempty by compactness. Since $Y$ is countable, there must exist a countable ordinal $\gamma$ such that
\begin{displaymath}
Y_{\gamma} \cap [x_{0},x_1]\neq \emptyset,~{\rm{and}}~Y_{\gamma+1}\cap [x_{0},x_1]=\emptyset.
\end{displaymath}
Thus every point of $Y_{\gamma}\cap [x_{0},x_1]$ is isolated in $Y_{\gamma}$ under the subspace topology. \\

Take $y\in Y_{\gamma}\cap [x_{0},x_1]$. We claim that $Gy$ is a closed subset of $\mathbb{R}$ without accumulating points.
Otherwise, there exist $f_n\in G$ such that $f_n(y)\rightarrow z\in \text{Fix}(\Gamma)$ as $n\rightarrow\infty$. Let $k\in \mathbb{Z}$ be such that $z\in [x_{k},x_{k+1}]$. Then $g^{-k}f_n(y)\in Y_{\gamma}\cap [x_{0},x_1]$ for sufficiently large $n$ and $g^{-k}f_n(y)\rightarrow g^{-k}(z)\in [x_0,x_1]$ as $n\rightarrow\infty$. Then $Y_{\gamma}$ has an accumulating point in $[x_0,x_1]$ which is a contradiction (note that for any $\alpha\leq\gamma$, $x\in Y_{\alpha}\cap [x_{0},x_1]$ and $f\in G$, if $f(x)\in [x_{0},x_1]$ then $f(x)\in Y_{\alpha}\cap [x_{0},x_1]$, since $f$ is a homeomorphism).  Thus $Gy$ is a discrete sequence $(y_n)_{n\in\mathbb{Z}}$ which is unbounded in both directions. Let $\mu=\sum_{n\in \mathbb{Z}}\delta_{y_n}$. Then $\mu$ is a $G$-invariant Radon measure and $Gy$ is a minimal closed subset.

\section{proof of Theorem \ref{main corollary}}
For a nilpotent subgroup of ${\rm{Homeo}}_{+}(\mathbb{R})$, if it does not have an invariant Radon measure, then we can construct an infinite tower which is available for us to deal with the smooth case. Precisely, we have the following lemma. (We use $\mathbb{N}^{+}$ to denote the set of positive integers.)\\

\begin{lemma}\label{tower for nilpotent}
Let $G$ be a nilpotent subgroup of ${\rm{Homeo}}_{+}(\mathbb{R})$. If there does not exist $G$-invariant Radon measure of $\mathbb{R}$, then there exist subgroups $A,B$ of $G$, a closed interval $I_0$ and an infinite tower $(I_i, h_i)_{i=1}^{\infty}$ such that
\begin{itemize}
  \item [(1)] for any $i\in\mathbb{N}^{+}$, $I_i$ is a closed interval and $I_i$ is contained in the interior of $I_{i+1}$;
  \item [(2)]  $\forall j\in\mathbb{N}^{+}$, ${\rm{Fix}}(h_j)\cap I_{j}={\rm{End}}(I_j)$;
  \item [(3)] $I_0\subseteq {\rm{int}} (I_1)$ and $ {\rm{Fix}}(A)\cap I_{0}={\rm{End}}(I_0)$;
  \item [(4)] $A\vartriangleleft B, [B,B]\leq A$, and $h_j\in B$, $\forall j\in\mathbb{N}^{+}$.
\end{itemize}
\end{lemma}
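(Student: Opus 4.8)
\textbf{Proof proposal for Lemma \ref{tower for nilpotent}.}

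The plan is to induct on the nilpotency class of $G$, using the lower central series and the fact that a nilpotent group with no invariant Radon measure must, in an appropriate sense, already fail to have one at some quotient stage where we can locate crossed behaviour. The base case is a nontrivial abelian $G$: if $G$ acts freely then by H\"older's Lemma \ref{Holder} it embeds in $(\mathbb R,+)$ and Lebesgue measure pulls back to an invariant Radon measure, a contradiction; hence some $f\in G$ has $\mathrm{Fix}(f)\neq\emptyset$, and since $G$ is abelian (so has no crossed elements and $\mathrm{Fix}(f)$ is $G$-invariant) we can restrict attention to a component $(\alpha,\beta)$ of $\mathbb R\setminus\mathrm{Fix}(f)$. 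The absence of an invariant Radon measure (in particular the absence of a global fixed point and, via Lemma \ref{infinity tower}, the presence of towers) lets us inflate this single pair into an infinite tower. For the abelian base case $A$ will be taken to be a cyclic or suitably chosen subgroup with $\mathrm{Fix}(A)\cap I_0=\mathrm{End}(I_0)$, and $B=G$, so that $[B,B]=1\leq A$ holds trivially.

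For the inductive step, write $Z$ for a central subgroup of $G$ (e.g.\ the last nontrivial term of the lower central series, or the center) and consider $\bar G=G/Z$, which is nilpotent of smaller class. The key dichotomy is whether $Z$ has a nonempty fixed-point set. If $\mathrm{Fix}(Z)=\emptyset$, then since $Z$ is abelian and acts without global fixed point, either $Z$ acts freely — in which case $Z\cong$ a subgroup of $\mathbb R$ and, crucially, because $Z$ is central, the $G$-action descends to $G/Z$ on the quotient line obtained by collapsing orbit-closures, reducing to the lower-class case; or some $z\in Z$ has a nonempty fixed set, and then $\mathrm{Fix}(z)$ is $G$-invariant (as $z$ is central) and we work inside a component. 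If $\mathrm{Fix}(Z)\neq\emptyset$, then $\mathrm{Fix}(Z)$ is a $G$-invariant closed set; collapsing the closures of the complementary components gives an action of $G$ (through $G/\langle\text{stabilizing part}\rangle$) on a line, to which we apply the inductive hypothesis, and then pull the resulting tower back — intervals of the tower on the quotient line lift to intervals in $\mathbb R$ whose endpoints lie in $\mathrm{Fix}(Z)$, and the homeomorphisms lift because $Z$ is central. Throughout, the groups $A$ and $B$ are assembled so that $A$ contains the ``vertical'' part responsible for $I_0$, $B$ contains all the tower maps $h_j$, and $[B,B]\leq A$ is arranged by choosing $B$ to map into an abelian quotient and $A$ to contain the relevant commutators; normality $A\vartriangleleft B$ comes from Fact \ref{fact1} applied to fixed-point sets.

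The main obstacle I expect is the bookkeeping that simultaneously produces the \emph{nested, interior-containing} intervals $I_i\subset\mathrm{int}(I_{i+1})$ with $\bigcup I_i$ not required to be all of $\mathbb R$ (unlike in Lemma \ref{infinity tower}) \emph{and} maintains the algebraic constraints (4) — in particular making sure that the single distinguished map pushing $I_0$ outward at the bottom lands in a subgroup $A$ that is normal in the larger group $B$ generated together with all the $h_j$, while keeping $[B,B]\le A$. Concretely, one wants to take $A$ to be (the intersection with $G$ of) the kernel of the action on the quotient line, which is automatically normal and contains $[G,G]$ when the quotient action is abelian; the delicate point is ensuring $A$ is nontrivial enough to give $\mathrm{Fix}(A)\cap I_0=\mathrm{End}(I_0)$ rather than $A$ fixing $I_0$ pointwise. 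I would handle this by descending the induction not to the full quotient by the center but to the quotient by the \emph{minimal} central subgroup whose fixed set separates points of the relevant interval, so that $A$ is exactly that central subgroup (or a cyclic piece of it) acting nontrivially on $I_0$. The verification that the lifted homeomorphisms have $\mathrm{Fix}(h_j)\cap I_j=\mathrm{End}(I_j)$ — i.e.\ that no new interior fixed points appear under lifting — is routine once one notes that a lift can be chosen to agree with the quotient map's dynamics away from $\mathrm{Fix}(Z)$ and that interior points of $I_j$ map into the interior, using Fact \ref{fact2} and the no-crossing hypothesis.
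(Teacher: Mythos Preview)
Your inductive scheme via quotients is not the paper's argument, and it has a genuine gap. The paper never passes to a quotient line at all. Instead it works directly: let $H\le G$ be the subgroup generated by all elements with a fixed point; by Lemma~\ref{generator fix} every element of $H$ has a fixed point, and by the proof of Theorem~\ref{main theorem 1} one has $H\neq\{e\}$ and $\mathrm{Fix}(H)=\emptyset$ (else $G$ would admit an invariant Radon measure). Since $H$ is nilpotent, take its lower central series $\{e\}=H_0\triangleleft H_1\triangleleft\cdots\triangleleft H_n=H$ with $[H,H_{i+1}]\le H_i$, and let $m$ be minimal with $\mathrm{Fix}(H_m)=\emptyset$. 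Set $B=H_m$ and $A=H_{m-1}$ (or $A=\langle h_0\rangle$ cyclic when $m=1$); then $A\triangleleft B$ and $[B,B]\le A$ are automatic, and $\mathrm{Fix}(A)\neq\emptyset$ while $\mathrm{Fix}(B)=\emptyset$. Take $I_0$ to be the closure of a component of $\mathbb R\setminus\mathrm{Fix}(A)$; since $\mathrm{Fix}(A)$ is $B$-invariant and $B$ has no global fixed point, one can repeatedly choose $h_{i+1}\in B$ moving an endpoint of $I_i$ off itself and let $I_{i+1}$ be the closure of the component of $\mathbb R\setminus\mathrm{Fix}(h_{i+1})$ containing $I_i$. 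The endpoints of $I_{i+1}$ are limits of $h_{i+1}$-orbits of points in $\mathrm{Fix}(A)$, hence again lie in $\mathrm{Fix}(A)$, which is what makes the induction continue. No quotienting, no lifting.

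The gap in your plan is twofold. First, to apply induction to the $\bar G$-action on the collapsed line you must know that \emph{this} action also admits no invariant Radon measure; but the whole content of Lemma~\ref{uncountable case} and Subcase~2c of Theorem~\ref{main theorem 1} is precisely that a measure on the quotient line can be pulled back to one on $\mathbb R$, so you would need that pullback argument in full before your induction even starts --- at which point you have essentially reproved the theorem rather than inducted. Second, your ``routine'' claim that lifted $h_j$ acquire no interior fixed points is not routine: if an endpoint $\bar a$ of $\bar I_j$ has a nondegenerate fiber $\varphi^{-1}(\bar a)$, then points in $\mathrm{int}(I_j)$ can map to $\bar a$, and a lift of $\bar h_j$ (which fixes $\bar a$) may well fix such points. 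Fact~\ref{fact2} does not prevent this, since it only tells you the lift maps the component off itself when it moves an endpoint --- but here the lift \emph{fixes} the relevant endpoints of the fiber. The paper's direct construction never encounters either difficulty.
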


\begin{proof}
Let $H$ be a subgroup of $G$ generated by the elements that have fixed points. Then, by Lemma \ref{generator fix}, every element of $H$ has fixed points and $H$ is a normal subgroup of $G$, by Fact \ref{fact1}.\\

\noindent\textbf{Claim 1}. $H\neq \{e\}$.\\
\indent If $H= \{e\}$, then the action of $G$ is free. Thus, by H\"{o}lder's Theorem \ref{Holder}, $G$ consists of the translations of the line. Then the Lebesgure measure is an invariant Radon measure, which contracts the hypothesis of the lemma.\\

\noindent\textbf{Claim 2}. ${\rm{Fix}}(H)= \emptyset$.\\
\indent If ${\rm{Fix}}(H)\neq \emptyset$, then we conclude that $G$ have an invariant Radon measure and a minimal subset by Lemma \ref{uncountable case} for case that ${\rm{Fix}}(H)$ is uncountable and by Subcase 2c in the proof of Theorem \ref{main theorem 1} for the case that ${\rm{Fix}}(H)$ is countable. Thus we get a contraction again. \\

Since $H$ is nilpotent, there exist a finite normal series $\{e\}=H_0\vartriangleleft H_1\vartriangleleft\cdots\vartriangleleft H_n=H$ of $H$, for some positive integer $n$, such that $[H, H_{i+1}]\leq H_i$, for any $i=1,\cdots,n-1$. Since ${\rm{Fix}}(H)=\emptyset$ by Claim 2, we can take $m\in\{1,\cdots,n\}$ to be the least integer such that ${\rm{Fix}}(H_m)=\emptyset$.\\

\noindent\textbf{Case 1}. $m=1$.\\
\indent In this case, take a nontrivial element $h_0\in H_1$. Then take a connected component $(a_0,b_0)$ of $\mathbb{R}\setminus {\rm{Fix}}(h_0)$. We claim that $a_0,b_0\in\mathbb{R}$. In fact, at least one of $a_0,b_0$ is finite, since $h_0$ is nontrivial. We may assume that $a_0\in\mathbb{R}$. By the assumption that ${\rm{Fix}}(H_1)=\emptyset$, there exists some $h\in H_1$ such that $h(a_0)>a_0$. Note that $H_1$ is commutative. Thus we have $h(a_0)\in {\rm{Fix}}(h_0)$, and then $b_0\leq h(a_0)$. Therefore, $a_0,b_0\in\mathbb{R}$ and ${\rm{Fix}}(h_0)\cap [a_0,b_0]=\{a_0,b_0\}$.\\
\indent Take $h_1\in H_1$ such $h_1(b_0)>b_0$. Then $h_1((a_0,b_0))\cap (a_0,b_0)=\emptyset$, by the commutativity of $H_1$. Thus ${\rm{Fix}}(h_1)\cap [a_0,b_0]=\emptyset$. Let $(a_1,b_1)$ be the connected component of $\mathbb{R}\setminus {\rm{Fix}}(h_1)$ containing $[a_0,b_0]$. By the similar arguments as above, we have $a_1,b_1\in\mathbb{R}$ and ${\rm{Fix}}(h_1)\cap [a_1,b_1]=\{a_1,b_1\}$. Proceeding in this way, we obtain an infinite tower $([a_i,b_i],h_i)_{i=1}^\infty$ in the end.\\
\indent Take $A=\langle h_0\rangle$, $B=H_1$, $I_i=[a_i, b_i]$, and $h_i$ to be as above. Then $A$, $B$, $I_0$ and $(I_i, h_i)_{i=1}^\infty$ such defined satisfy the requirements.\\

\noindent\textbf{Case 2}. $m>1$.\\
\indent In this case, we take $A=H_{m-1}$ and $B=H_{m}$. Take a connected component $(a_0,b_0)$ of $\mathbb{R}\setminus {\rm{Fix}}(A)$. By similar arguments as above, we have $a_0,b_0\in\mathbb{R}$ and ${\rm{Fix}}(A)\cap [a_0,b_0]=\{a_0,b_0\}$ (Note that ${\rm{Fix}}(A)$ is $B$-invariant, since $A$ is normal in $B$). \\
\indent Since ${\rm{Fix}}(B)=\emptyset$, there exists $h_1\in B$ such that $h_1(b_0)>b_0$, which implies that $h_1(a_0, b_0)\cap (a_0, b_0)=\emptyset$. Thus ${\rm{Fix}}(h_1)\cap [a_0,b_0]=\emptyset$.  Let $(a_1,b_1)$ be the connected component of $\mathbb{R}\setminus {\rm{Fix}}(h_1)$ containing $[a_0,b_0]$. Then $a_1,b_1\in\mathbb{R}$ and ${\rm{Fix}}(h_1)\cap [a_1,b_1]=\{a_1,b_1\}$. Moreover, $a_1,b_1\in {\rm{Fix}}(A)$, since $h_1^i(a_0),h_1^i(b_0)\in {\rm{Fix}}(A)$ for all $i$ and $\lim_{i\rightarrow -\infty}h_1^{i}(a_0)=a_1, \lim_{i\rightarrow +\infty}h_1^{i}(b_0)=b_1$.\\
\indent Now $b_1\in {\rm{Fix}}(A)\cap {\rm{Fix}}(h_1)$. Then we can take $h_2\in B$ such $h_2(b_1)>b_1$ by ${\rm Fix}(B)=\emptyset$. Similarly, we can take an interval $[a_2,b_2]$ such that $[a_1,b_1]\subseteq (a_2,b_2)$ and ${\rm{Fix}}(h_2)\cap [a_2,b_2]=\{a_2,b_2\}$. Since $[B, B]\subset A$, the group $\langle A, h_1\rangle$ 
is normal in $B$. Then we have further $\{a_2, b_2\}\subset {\rm Fix}(A)\cap {\rm Fix}(h_1)\cap {\rm Fix}(h_2)$. 
 Inductively, we can obtain an infinite tower $([a_i,b_i],h_i)_{i=1}^\infty$ which satisfies
\begin{itemize}
  \item [(i)] $\forall i\in \mathbb{N}^{+}, [a_i,b_i]\subseteq(a_{i+1},b_{i+1})$,
  \item [(ii)] $\forall i\in\mathbb{N}^{+}$, $h_i\in B$, and ${\rm{Fix}}(h_i)\cap [a_i,b_i]=\{a_i,b_i\}$,
  \item [(iii)] $\forall i\in\mathbb{N}^{+}$, $\{a_{i},b_{i}\}\subset {\rm{Fix}}(A)\cap {\rm{Fix}}(\langle h_1,\cdots,h_i\rangle)$.
\end{itemize}
Thus we complete the proof by taking $A=H_{m-1}$, $B=H_m$ $I_i=[a_i,b_i]$,  and $h_i$ to be as above.\\
\end{proof}

To prove Theorem \ref{main corollary}, we need the following version of generalised Kopell Lemma.

\begin{lemma}[\cite{Na2} Proposition 2.8]\label{kopell}
Given an integer $k\geq 3$, let $\{L_{l_1,\cdots,l_k}: (l_1,\cdots,l_k)\in\mathbb{Z}^k\}$ be a family of closed intervals with disjoint interiors and disposed on $[0,1]$ respecting the lexicographic order, that is, $L_{l_1,\cdots,l_k}$ is to the left of $L_{l'_1,\cdots,l'_k}$ if and only if $(l_1,\cdots,l_k)$ is lexicographically smaller that $(l'_1,\cdots,l'_k)$. Let $h_1,\cdots,h_k$ be $C^{1}$ diffeomorphisms of $[0,1]$ such that for each $j\in\{1,\cdots,k\}$ and each $(l_1,\cdots,l_k)\in\mathbb{Z}^k$ one has
\[
h_j(L_{l_1,\cdots,l_{j-1},l_{j},\cdots,l_k})=
L_{l_1,\cdots,l_{j-1},l'_{j},\cdots,l'_k},
\]
for some $(l'_j,l'_{j+1},\cdots, l'_k)\in \mathbb{Z}^{k-j+1}$ satisfying $l'_j\neq l_j$. If $\alpha>0$ satisfies
\[
\alpha(1+\alpha)^{k-2}\geq 1,
\]
then $h_1,\cdots,h_{k-1}$ cannot be simultaneously contained in $\rm{Diff}^{1+\alpha}_{+}([0,1])$.
\end{lemma}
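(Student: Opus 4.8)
The plan is to argue by contradiction: assume the full nested shift configuration exists with all of $h_1,\dots,h_k\in\mathrm{Diff}^1_+([0,1])$ and, in addition, $h_1,\dots,h_{k-1}\in\mathrm{Diff}^{1+\alpha}_+([0,1])$, and derive that necessarily $\alpha(1+\alpha)^{k-2}<1$. First I would record the geometric structure. For $0\le j\le k$ and a fixed prefix $(l_1,\dots,l_j)$, let $U_{l_1,\dots,l_j}$ be the convex hull of $\bigcup_{l_{j+1},\dots,l_k}L_{l_1,\dots,l_k}$ (so $U_\emptyset=[0,1]$ and $U_{l_1,\dots,l_k}=L_{l_1,\dots,l_k}$); these \emph{level-$j$ blocks} have pairwise disjoint interiors and, for each fixed $(l_1,\dots,l_{j-1})$, tile the block $U_{l_1,\dots,l_{j-1}}$ and accumulate only at its two endpoints. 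Since each $h_j$ is orientation preserving and fixes the first $j-1$ coordinates while sending $l_j$ to $l'_j\ne l_j$, it preserves every level-$(j-1)$ block setwise and acts on the level-$j$ sub-blocks inside it as a nontrivial shift $l_j\mapsto l_j+c_j$ with $c_j\ne 0$; in particular it fixes the endpoints of each level-$(j-1)$ block and has no interior fixed point there. The maps $h_{j+1},\dots,h_k$ fix the first $j$ coordinates, hence preserve every level-$j$ block. Thus inside each block one sees exactly a Kopell-type configuration, nested $k$ deep, and $\sum|U_{l_1,\dots,l_j}|\le 1$ at every level.

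The analytic engine is the standard $C^{1+\alpha}$ distortion estimate: if $g\in\mathrm{Diff}^{1+\alpha}_+$, then $\log Dg$ is $\alpha$-H\"older, so for disjoint intervals $J_0,\dots,J_{n-1}$ lying along a $g$-orbit one has
\[
\Bigl|\log\tfrac{Dg^n(x)}{Dg^n(y)}\Bigr|\le C\sum_{i=0}^{n-1}|J_i|^{\alpha},\qquad x,y\in J_0 .
\]
Combined with the parabolic behaviour forced at the shared fixed endpoints (where $Dh_j=1$, since fixed points of the tiling accumulate there), this yields at each smooth level a quantitative decay law for block sizes and, crucially, a \emph{one-level summability inequality}: controlling $\sum(\text{level-}j\ \text{sizes})^{\gamma}$ inside a block lets one control $\sum(\text{level-}(j-1)\ \text{sizes})^{\gamma/(1+\alpha)}$ in the enclosing block, the working exponent improving by the factor $1+\alpha$ whenever one passes outward through a $C^{1+\alpha}$ map. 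I would formulate and prove this single-step inequality with care, as it is the load-bearing estimate.

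I would then run this inequality as an induction (equivalently, a reduction $k\rightsquigarrow k-1$ obtained by collapsing the innermost tiles, which turns the level-$(k-1)$ blocks into the tiles of a $(k-1)$-fold configuration). Starting from the trivial total-length bound at the innermost, merely $C^1$ level, the first smooth map $h_{k-1}$ introduces the exponent $\alpha$, and each of the further $k-2$ smooth maps $h_{k-2},\dots,h_1$ multiplies it by $1+\alpha$, driving the effective summability exponent up to $\alpha(1+\alpha)^{k-2}$ at the outermost level. The contradiction is then read off at the threshold: once $\alpha(1+\alpha)^{k-2}\ge 1$, the propagated estimate forces $\sum_{l_1}|U_{l_1}|^{\,\alpha(1+\alpha)^{k-2}}$ to diverge, whereas an exponent $\ge 1$ together with $\sum_{l_1}|U_{l_1}|\le 1$ makes it converge; equivalently, the estimate forces the innermost shift $h_k$ to fix its tiles, contradicting $c_k\ne 0$.

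The main obstacle is the single-step exponent-improvement inequality, i.e.\ proving that passing outward through one $C^{1+\alpha}$ map genuinely multiplies the working exponent by $1+\alpha$. This requires pinning down the parabolic decay rate of block sizes near the shared fixed endpoints from only $\alpha$-H\"older control of the logarithmic derivative, and, more delicately, transferring geometric control through the innermost map, which is assumed only $C^1$: the $C^1$ hypothesis gives $Dh_k=1$ at the accumulation points but no H\"older rate, so the argument must be arranged so that $h_k$ enters only through the total-length bound $\sum|L|\le 1$ and never through a distortion sum. Treating the hyperbolic case ($Dh_j\ne 1$ at an endpoint) uniformly with the parabolic case, and keeping the constants $C$ uniform across the infinitely many blocks at each level, are the remaining technical points.
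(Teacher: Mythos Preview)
The paper does not prove this lemma at all: it is quoted verbatim as Proposition~2.8 of \cite{Na2} and used as a black box in the proof of Theorem~\ref{main corollary}. There is therefore no ``paper's own proof'' to compare your attempt against.

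That said, your sketch is a faithful outline of the argument in \cite{Na2}, and the architecture---nested level-$j$ blocks, the $C^{1+\alpha}$ distortion sum $\sum|J_i|^\alpha$, and an inductive exponent-improvement that multiplies by $1+\alpha$ at each smooth level---is exactly the load-bearing mechanism there. Two small cautions if you actually intend to fill this in. First, the hypothesis does \emph{not} say $h_j$ acts on the $j$-th coordinate as a fixed translation $l_j\mapsto l_j+c_j$; it only says the image has some $l'_j\neq l_j$ and unspecified later coordinates. You must deduce from orientation preservation and the ordering that $h_j$ permutes the level-$j$ blocks inside each level-$(j-1)$ block as a translation; this is easy but not automatic. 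Second, your ``one-level summability inequality'' is stated as a slogan (control of $\sum|\cdot|^{\gamma}$ yields control of $\sum|\cdot|^{\gamma/(1+\alpha)}$ one level up), but the actual estimate in \cite{Na2} is phrased rather differently and requires a careful bootstrapping of the H\"older distortion bound together with the tangency $Dh_j=1$ at block endpoints; getting the exponent to improve by exactly the factor $1+\alpha$, with constants uniform over infinitely many blocks, is the genuine work, and your sketch acknowledges but does not address it.
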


\begin{proof}[Proof of Theorem \ref{main corollary}]
Let $G$ be a nilpotent subgroup of ${\rm{Diff}}_{+}^{1+\alpha}(\mathbb{R})$, for some $\alpha>0$. By Theorem \ref{main theorem 1}, it suffices to show that there exists an invariant Radon measure.\\

To the contrary, if there does not exist any invariant Radon measure, then, by Lemma \ref{tower for nilpotent}, there exist subgroups $A,B$ of $G$, a closed interval $I_0$ and an infinite tower $(I_i, h_i)_{i=1}^{\infty}$ satisfying the properties $(1)-(4)$ in Lemma \ref{tower for nilpotent}. Moreover, we may assume that $h_i(x)>x$, for any $i\in\mathbb{N}^{+}$ and any $x\in I_0$; otherwise, we can replace it by its inverse. Take a positive integer $k\geq 3$ such that $\alpha(1+\alpha)^{k-2}\geq 1$ and set
\[ \mathcal{L}=\{g(I_0): g\in \langle A\cup\{h_1,\cdots, h_{k}\}\rangle\}.\]
\textbf{Claim}. For each $L\in \mathcal{L}$, $L$ is contained in the interior of $I_{k+1}$ and there exists a unique $(l_1,\cdots,l_k)\in\mathbb{Z}^{k}$ such that $L=h_{1}^{l_1}\cdots h_i^{l_{i}}\cdots h_{k}^{l_k}(I_0)$.\\
\indent In fact, we set $\Gamma=\langle A\cup\{h_1,\cdots, h_{k}\}\rangle$. Since $\Gamma\leq B$ and $[B,B]\leq A$, we have that $A\vartriangleleft \Gamma$ and $\Gamma/ A$ is an Abelian group with finite rank. Note that ${\rm{Homeo}}_{+}(\mathbb{R})$ is torsion-free. Thus, for any $g\in \Gamma$, there exists a unique $(l_1,\cdots,l_k)\in\mathbb{Z}^{k}$ such that $gA=h_{1}^{l_1}\cdots h_{k}^{l_k}A$. Hence the second part of the claim holds by the fact that $I_0$ is $A$-invariant. The first part is easily followed from the observation that the end points of $I_{k}$ are contained in ${\rm{Fix}}(\Gamma)$. Thus the claim holds.\\

For every $(l_1,\cdots,l_k)\in\mathbb{Z}^{k}$,  it is clear that
\begin{eqnarray*}
h_{1}^{l_1}\cdots h_{k}^{l_k}(I_0)&\subseteq& h_{2}^{l_2}\cdots h_{k}^{l_k}(I_1)\\&\subseteq &\cdots\subseteq h_i^{l_{i}}\cdots h_{k}^{l_k}(I_{i-1})\subseteq\cdots\\&\subseteq& h_{k}^{l_k}(I_{k-1})\subseteq I_{k}.
\end{eqnarray*}
Set $L_{l_1,\cdots,l_k}=h_{1}^{l_1}\cdots h_{k}^{l_k}(I_0)$. Thus $\{L_{l_1,\cdots,l_k}: (l_1,\cdots,l_k)\in\mathbb{Z}^k\}$ is a family of closed intervals contained in $I_{k}$ with disjoint interiors. By the assumption that $h_i(x)>x$, for every $i\geq 2$ and every $x\in I_0$, $\{L_{l_1,\cdots,l_k}: (l_1,\cdots,l_k)\in\mathbb{Z}^k\}$ are disposed on $I_{k+1}$ respecting the lexicographic order.
By the Claim, we have $\mathcal{L}=\{L_{l_1,\cdots,l_k}: (l_1,\cdots,l_k)\in\mathbb{Z}^k\}$. Now for each $j\in\{2,\cdots,k+1\}$ and each $(l_1,\cdots,l_k)\in\mathbb{Z}^k$ one has
\[
h_j(L_{l_1,\cdots,l_{j-1},l_{j},l_{j+1},\cdots,l_k})=
L_{l_1,\cdots,l_{j-1},l_{j}+1,l_{j+1},\cdots,l_k}.
\]
By the choice of $k$ and Lemma \ref{kopell}, we know that $h_1,\cdots,h_{k-1}$ cannot be contained in ${\rm{Diff}}^{1+\alpha}_{+}(I_{k})$ simultaneously, which contradicts the hypothesis that $G$ is a subgroup of ${\rm{Diff}}_{+}^{1+\alpha}(\mathbb{R})$. This completes the proof.
\end{proof}

\section{A Counterexample of $C^1$ Subgroup}

In this section, we construct an example which shows that Theorem \ref{main corollary} does not hold for $C^1$ commutative subgroups of  $\text{Homeo}_+(\mathbb{R})$. The following construction is due to Yoccoz(\cite[Lemma 2.1]{Farb}).

\begin{lemma}\label{equivariant family}
For any closed intervals $I=[a,b] ,J=[c,d]$ there exists a $C^1$ orientation preserving diffeomorphism $\phi_{I,J}: I\longrightarrow J$ with the following properties:
\begin{enumerate}
  \item [(1)]  $\phi_{I,J}'(a)=\phi_{I,J}'(b)=1$;
  \item [(2)]  Given $\varepsilon>0$, there exists $\delta>0$ such that for all $x\in [a,b]$,
  \begin{displaymath}
  \left| \phi_{I,J}'(x)-1 \right|<\varepsilon,~~\text{whenever}~~\left|\frac{d-c}{b-a}-1\right|<\delta;
  \end{displaymath}
  \item [(3)] For any closed interval $K$ and for any $x\in I$,
  \begin{displaymath}
  \phi_{I,K}(x)=\phi_{J,K}(\phi_{I,J}(x)).
  \end{displaymath}
\end{enumerate}
\end{lemma}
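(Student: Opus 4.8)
The plan is to realize the entire family at once by a single cocycle construction. Fix, for every nondegenerate closed interval $I=[a,b]$, a $C^1$ orientation-preserving parametrization $\psi_I\colon[0,1]\to I$ with $\psi_I'(0)=\psi_I'(1)=1$, and set $\phi_{I,J}:=\psi_J\circ\psi_I^{-1}$. With this definition property (3) is automatic, since $\phi_{J,K}\circ\phi_{I,J}=\psi_K\circ\psi_J^{-1}\circ\psi_J\circ\psi_I^{-1}=\psi_K\circ\psi_I^{-1}=\phi_{I,K}$; conversely, any family satisfying (3) is of this form (take $\psi_I:=\phi_{[0,1],I}$), so nothing is lost by this reduction. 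The whole problem thus becomes: choose the $\psi_I$ coherently and depending continuously on $I$.

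Concretely I would take $\psi_{[a,b]}(t)=a+\int_0^t\eta_{b-a}(s)\,ds$, where $\{\eta_\ell\}_{\ell>0}$ is a family in $C([0,1])$, still to be constructed, satisfying: (i) $\eta_\ell>0$ and $\eta_\ell(0)=\eta_\ell(1)=1$; (ii) $\int_0^1\eta_\ell(s)\,ds=\ell$; (iii) $\eta_1\equiv1$; (iv) $\ell\mapsto\eta_\ell$ is continuous from $(0,\infty)$ into $C([0,1])$ with the sup-norm. Granting (i)--(ii), each $\psi_I$ is a $C^1$ diffeomorphism of $[0,1]$ onto $I$ with unit derivative at the endpoints, so every $\phi_{I,J}=\psi_J\circ\psi_I^{-1}$ is a $C^1$ orientation-preserving diffeomorphism $I\to J$; and for $x\in I$, writing $t=\psi_I^{-1}(x)$, one has $\phi_{I,J}'(x)=\psi_J'(t)/\psi_I'(t)=\eta_{|J|}(t)/\eta_{|I|}(t)$. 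Evaluating at $t\in\{0,1\}$ gives $1/1=1$, which is property (1). For property (2), fix $I$; then $m:=\min_{[0,1]}\eta_{|I|}>0$, so $|\phi_{I,J}'(x)-1|\le\|\eta_{|J|}-\eta_{|I|}\|_\infty/m$ for every $x\in I$, and by (iv) we may choose $\delta>0$ (depending on $I$ and $\varepsilon$) so that $\bigl|\,|J|/|I|-1\,\bigr|<\delta$ forces the right-hand side below $\varepsilon$, uniformly in $x$.

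It remains to construct the family $\eta_\ell$, and this is the only genuinely substantive step; its point is to handle short intervals ($\ell<1$) and long ones ($\ell>1$) simultaneously while keeping the endpoint values pinned at $1$, which rules out a naive convex-combination ansatz such as $\eta_\ell=1+(\ell-1)b$ (positivity fails for small $\ell$). I would instead use a multiplicative ansatz: fix $c\in C([0,1])$ with $c(0)=c(1)=0$ and $c<0$ on $(0,1)$, say $c(s)=s^2-s$, and put $\eta_\ell:=e^{\beta(\ell)c}$ with $\beta(\ell)$ determined by (ii). This is legitimate because $h(\beta):=\int_0^1e^{\beta c(s)}\,ds$ is smooth and strictly decreasing (as $c<0$ almost everywhere), with $h(0)=1$, $h(\beta)\to0$ as $\beta\to+\infty$, and $h(\beta)\to+\infty$ as $\beta\to-\infty$; hence $\beta:=h^{-1}\colon(0,\infty)\to\mathbb{R}$ is a smooth bijection with $\beta(1)=0$, and the resulting $\eta_\ell=e^{\beta(\ell)c}$ automatically satisfies (i), (ii), (iii), and, since $\beta$ is continuous and $c$ is bounded, the continuity required in (iv). Inserting this family into the construction above yields the lemma; I expect the design of $\eta_\ell$ and the verification of (2) to be where the real content lies, the remaining checks being purely formal.
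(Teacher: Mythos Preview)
The paper does not prove this lemma; it merely attributes the construction to Yoccoz and cites \cite[Lemma~2.1]{Farb}. So there is no proof in the paper to compare against, and the question is whether your argument stands on its own.

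Your cocycle reduction $\phi_{I,J}=\psi_J\circ\psi_I^{-1}$ is the right framework: it makes (3) automatic and reduces (1) to the endpoint normalization $\eta_\ell(0)=\eta_\ell(1)=1$, both of which you verify correctly. The difficulty is entirely in (2), and here there is a genuine gap. Property~(2), read as a statement about the \emph{family} $\{\phi_{I,J}\}$, asserts that $\delta$ depends only on $\varepsilon$; this uniform reading is forced by the application in Section~5, where the lemma is invoked on intervals $[a_{n+1},a_n]$ whose lengths tend to zero while the successive length ratios tend to $1$, and one needs $\sup_x|\phi'_{[a_{n+1},a_n],[a_n,a_{n-1}]}(x)-1|\to 0$ to conclude that the glued map is $C^1$ at the boundary. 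You explicitly allow $\delta$ to depend on $I$, and your construction genuinely fails the uniform version. Indeed, with $c(s)=s^2-s$ one has $h(\beta)=\int_0^1 e^{\beta c}\sim 2/\beta$ as $\beta\to+\infty$ (Laplace), hence $\beta(\ell)\sim 2/\ell$ for small $\ell$; consequently
\[
\phi_{I,J}'\bigl(\psi_I(\tfrac12)\bigr)=\exp\!\Big(\big(\beta(|J|)-\beta(|I|)\big)\,c(\tfrac12)\Big)
\approx \exp\!\Big(\tfrac{1}{2|I|}\big(\tfrac{|J|}{|I|}-1\big)\Big),
\]
which blows up as $|I|\to 0$ for any fixed nonzero value of $|J|/|I|-1$. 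So no uniform $\delta$ exists for this family.

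The fix is not cosmetic: the constraint $\eta_\ell(0)=\eta_\ell(1)=1$ together with $\int_0^1\eta_\ell=\ell$ forces $\eta_\ell$ to dip toward $0$ when $\ell$ is small, and the multiplicative one-parameter ansatz $\eta_\ell=e^{\beta(\ell)c}$ lets the dip happen too abruptly in the scale variable. One needs a family for which $\ell\,\partial_\ell\log\eta_\ell(t)$ stays bounded uniformly in $(\ell,t)$; the Yoccoz construction in \cite{Farb} is designed precisely to achieve this, and you should either reproduce that construction or replace your $\eta_\ell$ by one with this scale-Lipschitz property before the argument for (2) goes through.
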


\begin{theorem}
There exists a non-finitely generated abelian group $G$ consisting of $C^1$ orientation preserving diffeomorphisms of $\mathbb{R}$ such that there exists an infinite tower $\{(I_j,f_j)\}_{j=1}^{\infty}$ with $f_j\in G, j=1,2,\cdots$, such that $\bigcup_{j=1}^{\infty}I_j=\mathbb{R}$. \\
(Then, by Theorem \ref{main theorem 1}, there exists neither $G$ invariant Radon measure nor nonempty closed minimal set.)
\end{theorem}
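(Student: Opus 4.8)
The plan is to construct $G$ as an explicit abelian group of $C^1$ diffeomorphisms supported on a carefully chosen exhausting sequence of intervals, using Yoccoz's equivariant family from Lemma~\ref{equivariant family} as the building block. First I would fix a bi-infinite sequence of points $\cdots < a_{-1} < a_0 < a_1 < \cdots$ with $a_n \to \pm\infty$, so that the intervals $I_j = [a_{-j}, a_j]$ exhaust $\mathbb{R}$. On each fundamental interval $[a_n, a_{n+1}]$ I want to place a copy of a model map, patched together via the equivariance property (3) of Lemma~\ref{equivariant family} so that the result is genuinely $C^1$ (the key being that property (1) forces derivative $1$ at every breakpoint $a_n$, and property (2) lets me control how close the derivative is to $1$ on each piece by making the ratios $|a_{n+1}-a_n|$ vary slowly). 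The generators $f_j$ would each be a homeomorphism that acts as a "shift by one fundamental domain" near the middle of the line but is the identity outside $I_j$ (more precisely, $\mathrm{Fix}(f_j) \cap I_j = \{a_{-j}, a_j\}$), and one arranges that the $f_j$ pairwise commute because they are all, in an appropriate coordinate, powers of or compatible with a single underlying translation structure on the "interior" region.

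The key steps, in order, would be: (i) choose the sequence $(a_n)$ with ratios $a_{n+1}-a_n$ tending to $1$ fast enough that the derivative estimates from Lemma~\ref{equivariant family}(2) can be summed/controlled; (ii) define, for each $j$, a candidate homeomorphism $f_j$ of $\mathbb{R}$ that fixes everything outside $(a_{-j}, a_j)$, maps $[a_{-j}, a_j]$ to itself, and "pushes points to the right" inside, built by gluing the Yoccoz diffeomorphisms $\phi_{[a_n,a_{n+1}],[a_{n+1},a_{n+2}]}$ appropriately in the middle and tapering to the identity near the endpoints $a_{\pm j}$; (iii) verify $f_j \in \mathrm{Diff}^1_+(\mathbb{R})$ by checking the derivative is continuous and equals $1$ at all gluing points and at $a_{\pm j}$ — this is exactly where properties (1) and (2) are used; (iv) verify that the $f_j$ commute, by exhibiting a common semiconjugacy (via a monotone map $\varphi$ collapsing the tapering regions) to a group of translations, or by direct computation using property (3); (v) check that $G = \langle f_j : j \geq 1\rangle$ is not finitely generated, since any finite subset lies in some $\langle f_1,\dots,f_N\rangle$ whose elements are all supported in $I_N \neq \mathbb{R}$; (vi) observe $\{(I_j, f_j)\}$ is an infinite tower with $\bigcup I_j = \mathbb{R}$, which is condition (3)'s negation, so Theorem~\ref{main theorem 1} gives the final parenthetical conclusion.

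The main obstacle I expect is step (iii): arranging the gluing so that $f_j$ is \emph{globally} $C^1$ on all of $\mathbb{R}$, including at the "seam" where the nontrivial middle part meets the identity near $a_{\pm j}$, and simultaneously keeping the family $\{f_j\}$ commuting. A naive gluing will be only $C^0$ or will break commutativity; the delicate point is that the tapering must be done in a way that is compatible across all $j$ at once. This is precisely what Yoccoz's construction is designed to finesse — by using $\phi_{I,J}$ with the self-consistent equivariance (3), one can define the maps on an infinite concatenation of intervals coherently, and the derivative-control property (2) guarantees that as one moves toward the ends of $I_j$ (where consecutive interval-ratios approach $1$) the maps become $C^1$-close to the identity, making the extension by the identity $C^1$. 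I would also need to take care that $G$ does not accidentally acquire a fixed point or preserve a Radon measure "by hand" — but this is automatic once the infinite tower covering $\mathbb{R}$ is exhibited, by the already-proven implications $(1)\Rightarrow(3)$ and $(2)\Rightarrow(3)$ of Theorem~\ref{main theorem 1}.
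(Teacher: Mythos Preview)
Your overall strategy---use Yoccoz's equivariant family to build commuting $C^1$ diffeomorphisms supported on exhausting intervals---is the right one and is what the paper does. But the specific construction you outline has two real gaps.

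The serious one is step (iv). A semiconjugacy to translations only shows that $f_if_j$ and $f_jf_i$ agree after collapsing, not that they are equal; and property~(3) of Lemma~\ref{equivariant family} is the cocycle identity $\phi_{I,K}=\phi_{J,K}\circ\phi_{I,J}$, not a commutation relation. Concretely: if $f_1$ and $f_2$ both coincide with the same shift $T$ on a central region, but $f_1$ tapers toward the identity on an annulus where $f_2$ still equals $T$, then on that annulus $f_2f_1=Tf_1$ while $f_1f_2=f_1T$, and nothing in your setup forces $f_1$ to commute with $T$ there. Step (iii) has a related problem: with a \emph{single} bi-infinite sequence $(a_n)$ tending to $\pm\infty$, the gap ratios near any fixed finite index $a_{\pm j}$ are not close to~$1$, so Lemma~\ref{equivariant family}(2) gives no derivative control at the seam between the nontrivial part of $f_j$ and the identity.

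The paper fixes both issues with one device you are missing: an inductive \emph{conjugation-extension}. One first defines $f_1$ on $[-1,1]$ (any $C^1$ diffeomorphism with derivative $1$ at the endpoints). At stage $k$, having commuting $f_1,\dots,f_k$ on $[-k,k]$, one picks a \emph{fresh} sequence accumulating at $\pm(k+1)$ inside the annulus $[-k-1,k+1]\setminus(-k,k)$ with gap ratios tending to~$1$, defines $f_{k+1}$ on $[-k-1,k+1]$ as the Yoccoz shift along that sequence, and then \emph{extends} each $f_i$ ($i\le k$) to the annulus by the formula $\tilde f_i = f_{k+1}^{-m}\, f_i\, f_{k+1}^{\,m}$ on the $m$-th fundamental domain of $f_{k+1}$. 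Commutativity of $\tilde f_i$ with $f_{k+1}$ is then automatic from the conjugation, and the fresh accumulating sequence at each level is exactly what makes the glued map $C^1$ at $\pm(k+1)$. This inductive mechanism, rather than a single global translation structure, is the missing idea.
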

\begin{proof}
Firstly, we define $f_1: [-1,1]\longrightarrow [-1,1]$ by
\begin{equation*}
f_1(x)=
\begin{cases}
\exp\left(\frac{1}{x-1}-\frac{1}{x+1}\right)+x,& x\in(-1,1)\\
-1,&x=-1\\
1,& x=1.
\end{cases}
\end{equation*}
Then $f_1$ satisfies
\begin{itemize}
  \item $f_1$ is a $C^1$ orientation preserving diffeomorphism of $[-1,1]$;
  \item $f_1(\pm1)=\pm 1$ and $f_1(x)> x $ for any $x\in (-1,1)$;
  \item $f_1'(-1)=f_1'(1)=1$.
\end{itemize}
Next, choose two infinite sequences $-2<\cdots<a_2<a_1<a_0=-1$ and $1=b_0<b_1<b_2<\cdots<2$ such that
\begin{displaymath}
\lim_{n\rightarrow \infty}a_n=-2,~~\lim_{n\rightarrow\infty} b_n=2,
\end{displaymath}
and
\begin{displaymath}
 \lim_{n\rightarrow\infty} \frac{a_{n-1}-a_{n}}{a_n-a_{n+1}} =1,~\lim_{n\rightarrow\infty} \frac{b_{n+1}-b_{n}}{b_n-b_{n-1}} =1.
\end{displaymath}
For example, we can take
\begin{displaymath}
a_n= -2+\frac{1}{n+1},~b_n=2-\frac{1}{n+1},~n=1,2,\cdots.
\end{displaymath}
Define
\begin{equation*}
f_2(x)=
\begin{cases}
\phi_{[a_{n+1},a_n],[a_n,a_{n-1}]}(x),& x\in [a_{n+1},a_n],n=1,2,\cdots\\
\phi_{[a_{1},a_0],[-1,1]}(x),& x\in [a_{1},a_0]\\
\phi_{[-1,1],[b_0,b_1],}(x),& x\in [-1,1]\\
\phi_{[b_{n},b_{n+1}],[b_{n+1},b_{n+2}]}(x),& x\in [b_n,b_{n+1}],n=0,1,2,\cdots\\
\pm2,& x=\pm2.
\end{cases}
\end{equation*}
Then, by Lemma\ref{equivariant family} and the choices of $\{a_n\}$ and $\{b_n\}$, $f_2$ satisfies
\begin{itemize}
  \item $f_2$ is a $C^1$ orientation preserving diffeomorphism of $[-2,2]$;
  \item $f_2(\pm2)=\pm 2$ and $f_2(x)> x $ for any $x\in (-2,2)$;
  \item $f_2'(-2)=f_2'(2)=1$.
\end{itemize}
Then we extend $f_1$ to a diffeomorphism $\tilde{f_1}$ of $[-2,2]$:
\begin{equation*}
\tilde{f_1}(x)=
\begin{cases}
f_2^{-(n+1)}f_1f_2^{n+1}(x),& x\in [a_{n+1},a_n],n=1,2,\cdots\\
f_1(x),& x\in[-1,1]\\
f_2^{n+1}f_1f_2^{-(n+1)}(x),& x\in [b_n,b_{n+1}]\\
\pm2,& x=\pm2.
\end{cases}
\end{equation*}
We denote $\tilde{f_1}$  by $f_1$  for $x\in[-2,2]$. Then
\begin{displaymath}
f_1f_2(x)=f_2f_1(x),~~\forall x\in [-2,2].
\end{displaymath}
Continuing the above process, we can construct a sequence of commuting  $C^1$ orientation preserving diffeomorphisms $f_1,f_2,\cdots$ of $\mathbb{R}$. More precisely, assume that we have constructed pairwise commuting $C^1$ orientation preserving diffeomorphisms $f_1,\cdots,f_k$ of $[-k,k]$ for $k\in\mathbb{N}^+$ with the following properties:
\begin{enumerate}
  \item $f_i(\pm i)=\pm i$ and $\forall x\in (-i,i)$, $f_{i}(x)>x$, for $i=1,2,\cdots,k$;
  \item $f_i'(- i)=f_i'(i)=1$, for $i=1,2,\cdots,k$;
  \item $f_if_j(x)=f_jf_i(x)$ for all $x\in[-k,k]$ and $1\leq i,j\leq k$.
\end{enumerate}
Then choose two infinite sequences $-(k+1)<\cdots<c_2<c_1<c_0=-k$ and $k=d_0<d_1<d_2<\cdots<k+1$ such that
\begin{displaymath}
\lim_{n\rightarrow \infty}c_n=-(k+1),~~\lim_{n\rightarrow\infty} d_n=k+1,
\end{displaymath}
and
\begin{displaymath}
 \lim_{n\rightarrow\infty} \frac{c_{n-1}-c_{n}}{c_n-c_{n+1}} =1,~\lim_{n\rightarrow\infty} \frac{d_{n+1}-d_{n}}{d_n-d_{n-1}} =1.
\end{displaymath}
For example, we can take
\begin{displaymath}
c_n= -(k+1)+\frac{1}{n+1},~d_n=k+1-\frac{1}{n+1},~n=1,2,\cdots.
\end{displaymath}
Define
\begin{equation*}
f_{k+1}(x)=
\begin{cases}
\phi_{[c_{n+1},c_n],[c_n,c_{n-1}]}(x),& x\in [c_{n+1},c_n],n=1,2,\cdots\\
\phi_{[c_{1},c_0],[-k,k]}(x),& x\in [c_{1},c_0]\\
\phi_{[-k,k],[d_0,d_1],}(x),& x\in [-k,k]\\
\phi_{[d_{n},d_{n+1}],[d_{n+1},d_{n+2}]}(x),& x\in [d_n,d_{n+1}],n=0,1,2,\cdots\\
\pm(k+1),& x=\pm(k+1).
\end{cases}
\end{equation*}
Then  by Lemma \ref{equivariant family} and the choices of $\{c_n\}$ and $\{d_n\}$, $f_{k+1}$ satisfies
\begin{itemize}
  \item $f_{k+1}$ is a $C^1$ orientation preserving diffeomorphism of $[-k-1,k+1]$;
  \item $f_{k+1}(\pm(k+1))=\pm (k+1)$ and $f_{k+1}(x)> x $ for any $x\in (-(k+1),k+1)$;
  \item $f_{k+1}'(-k-1)=f_{k+1}'(k+1)=1$.
\end{itemize}
We extend $f_1,\cdots,f_k$ to diffeomorphisms $\tilde{f_1}, \cdots,\tilde{f_k}$ of $[-(k+1),k+1]$: for $i=1,\cdots,k$,
\begin{equation*}
\tilde{f_i}(x)=
\begin{cases}
f_{k+1}^{-(n+1)}f_{i}f_{k+1}^{n+1}(x),& x\in [c_{n+1},c_n],n=1,2,\cdots\\
f_k(x),& x\in[-k,k],\\
f_{k+1}^{n+1}f_if_{k+1}^{-(n+1)}(x),& x\in [d_n,d_{n+1}],\\
\pm(k+1),& x=\pm(k+1).
\end{cases}
\end{equation*}
Denote $\tilde{f_i}$ by $f_i$ for $x\in [-(k+1),k+1]$. Then $f_1,\cdots,f_{k+1}$ are commuting orientation preserving $C^1$ diffeomorphisms of $[-(k+1),k+1]$.

From the constructing process, we see that $[-1,1]\subsetneq [-2,2]\subsetneq\cdots$ and $f_1,f_2,\cdots$ form an infinite tower, and the group $G$ generated by $f_1,f_2,\cdots$ is a non-finitely generated abelian group consisting of $C^1$ orientation preserving diffeomorphisms of $\mathbb{R}$. This completes the proof.
\end{proof}
It was pointed out by the referee that this example is essentially in \cite{BF}. However, it is worthwhile to construct it explicitly here,
especially for the convenience of the readers.
\vspace{5mm}

\subsection*{Acknowledgements}
We want to express our sincere thanks to the referees for valuable and thoughtful comments,  which highly improved the original version of the paper. \\

The work is supported by NSFC (No. 11771318, No. 11790274).

\vspace{5mm}

\vspace{5mm}

\noindent Enhui Shi, School of mathematical and sciences, Soochow University, Suzhou, 215006, P.R. China (ehshi@suda.edu.cn)\\

\noindent Yiruo Wang, School of mathematical and sciences, Soochow University, Suzhou, 215006, P.R. China (632804847@qq.com)\\

\noindent Hui Xu, School of mathematical and sciences, Soochow University, Suzhou, 215006, P.R. China (20184007001@stu.suda.edu.cn)


\begin{thebibliography}{999}
\bibitem{BF} C. Bonatti,  E. Farinelli, Centralizers of $C^1$-contractions of the half line. Groups Geom. Dyn. 9 (2015), no. 3, 831-889.

\bibitem{Farb} B. Farb, J. Franks, Goups of homeomorphisms of one-manifolds III: nilpotent subgroups. Ergodic Theory and Dynamical Systems 23 (2003), 1467-1484.

\bibitem{Na1} A. Navas, Groups of circle diffeomorphisms. Chicago Lectures in Math. (2011).


\bibitem{Na2}
A. Navas, Growth of groups and diffeomorphisms of the interval. Geom. and Functional Aanlysis. 18 (2008), 988-1028.



\bibitem{Plante} J. Plante, On solvable groups acting on the real line. Trans. AMS 278 (1983), 401-414.




\end{thebibliography}
\end{document}